\newtheorem{theorem}{Theorem}[section]
\newtheorem{lemma}[theorem]{Lemma}
\newtheorem{corollary}[theorem]{Corollary}
\theoremstyle{remark}
\newtheorem{remark}[theorem]{Remark}
\newtheorem{claim}{Claim} 
\theoremstyle{definition}
\newcommand{\cl}{\text{cl} }
\newcommand{\imp}{\mathbin{\rightarrow}}
\newcommand{\s}[2]{\{ #1 \mapsto #2 \} }
\newcommand{\idp}{\mathrm{Idp}}
\begin{document} 

\title{Strong completeness for the predicate logic of the continuous t-norms}

\author{D. Castaño, J. P. Díaz Varela, G. Savoy}

\maketitle

\begin{abstract}
The axiomatic system introduced by Hájek axiomatizes first-order logic based on BL-chains. 
In this study, we extend this system with the axiom $(\forall x \phi)^2 \leftrightarrow \forall x \phi^2$ and the infinitary rule 
\[
\frac{\phi \vee (\alpha \to \beta^n):n \in \mathbb{N}}{\phi \vee (\alpha \to \alpha \& \beta)}
\] 
to achieve strong completeness with respect to continuous t-norms. 
\end{abstract}

\section{Introduction}

In his 1961 paper \cite{Mosto61}, Mostowski proposed the study of first-order many-valued logics interpreting universal and existential quantifiers as infimum and supremum, respectively, on a set of truth values. 
From the 1963 article by Hay \cite{Hay59}, it follows that the infinitary rule
\[
    \frac{\phi \oplus \phi ^n : n \in \mathbb{N}}{\phi}
\] 
can be added to the first-order Łukasiewicz calculus to obtain weak completeness with respect to the Łukasiewicz t-norm.
Horn \cite{Horn69} later axiomatized first-order Gödel logic in 1969. 

Hájek \cite{Hajek98} provided a general approach to first-order fuzzy logic, introducing a syntactic logic, denoted by BL$\forall$, which is strongly complete with respect to models based on BL-chains. 
However, the problem of finding an appropriate syntactic logic for models based on continuous t-norms remained unresolved. 

In the propositional case, Hájek \cite{Hajek98} exhibited a syntactic logic that is strongly complete with respect to valuations on BL-chains. Ku\l acka \cite{Kulacka18} later proved that by adding the infinitary rule 
\[
    \frac{\phi \vee (\alpha \to \beta^n):n \in \mathbb{N}}{\phi \vee (\alpha \to \alpha \& \beta)}
\] 
to the syntactic logic, a strong completeness result can be achieved with respect to valuations on t-norms. 

In the first-order case, quantifiers can exhibit distinct behaviors in a continuous t-norm compared to a generic BL-chain. 
For example, the sentence,
\[
    \tag{RC}
    \forall x (\phi \& \phi) \to ((\forall x \phi)\&(\forall x \phi ))
\]  
is true in models based on continuous t-norms and is not true in general.
Moreover, Hájek and Montagna \cite{HajekMontagna08} demonstrated that standard first-order tautologies coincide with first-order tautologies over complete BL-chains satisfying (RC).

In this paper we show that by adding Kułacka's Infinitary Rule and Hájek and Montagna's axiom RC to BL$\forall$, a strong standard completeness result can be proven for models based on continuous t-norms.

The structure of this paper is as follows. 
Section \ref{preliminares} compiles pertinent definitions of the logic BL$\forall$, continuous t-norms and related algebraic structures. It also specifies our notion of model, validity and semantic consequence.
Section \ref{definicionLogicaInf} introduces a novel logic, extending the logic BL$\forall$ with an additional axiom and an infinitary rule.
Lemmas describing properties of the disjunction of this new logic are proved. 
A Henkin construction is then utilized to demonstrate that for a given theory $\Gamma$ and a sentence $\phi$ such that $\Gamma \nvdash \phi$, there exists an expanded theory $\Gamma^*$ that also satisfies $\Gamma^* \nvdash \phi$ and possesses additional properties (Henkin property and \textit{prelinearity}) necessary for the subsequent construction of a desirable Lindenbaum algebra. 
In Section \ref{sectionCompleteness}, a Lindenbaum algebra is constructed and embedded into a continuous t-norm, with the help of a weak saturation result also proven in this section, providing the final requisite for the strong completeness result.

\section{Preliminaries}
\label{preliminares}

% en esta seccion: definiciones basicas
\subsection{Hájek's basic predicate logic}
%\begin{definition}
A predicate language is a countable set consisting of variables $x, y, z, \ldots$, symbols for constants $c,d, \ldots$, symbols for predicates $P, Q, \ldots$, logical connectives $\to, \&$, the propositional constant $0$, and the symbols $\forall$ and $\exists$. 
Let $\mathcal{L}$ be a predicate language. Terms, formulas and sentences are built in the usual way from $\mathcal{L}$. 
Further connectives are defined as follows:
\begin{itemize}
    \item $\phi \wedge \psi $ is $\phi \& (\phi \to \psi)$,
    \item $\phi \vee \psi $ is $((\phi \to \psi) \to \psi) \wedge ((\psi \to \phi) \to \phi)$,
    \item $\neg \phi $ is $\phi \to 0 $,
    \item $1 $ is $0 \to 0 $.
\end{itemize}
We denote by $\phi \s{x}{t}$ the formula resulting from substituting the term $t$ for the free ocurrences of $x$ in $\phi$. 

The BL$\forall$ logic over $\mathcal{L}$ is the predicate logic given in Hilbert style by the following axioms and inference rules, where $\phi, \psi, \chi$ are formulas:
\begin{itemize}
    \item Propositional axiom schemata:
        \begin{description}
           \item[(A1)] $(\phi \to \psi) \to ((\psi \to \chi) \to (\phi \to \chi))$,
            \item[(A2)] $(\phi \& \psi) \to \phi$,
            \item[(A3)] $(\phi \& \psi) \to (\psi \& \phi)$,
            \item[(A4)] $(\phi \& (\phi \to \psi)) \to (\psi \& (\psi \to \phi))$, 
            \item[(A5)] $(\phi \to (\psi \to \chi)) \to ((\phi \& \psi) \to \chi)$,
            \item[(A6)] $((\phi \& \psi) \to \chi ) \to ( \phi \to (\psi \to \chi))$,
            \item[(A7)] $((\phi \to \psi) \to \chi) \to (((\psi \to \phi) \to \chi) \to \chi)$, 
            \item[(A8)] $0 \to \phi $.
        \end{description}
    \item First-order axiom schemata:
        \begin{description}
            \item[($\forall$1)] $\forall x \phi \to \phi\s{x}{t}$, where $t$ is substitutable for $x$ in $\phi$,
            \item[($\exists$1)] $\phi\s{x}{t} \to \exists x \phi$, where $t$ is substitutable for $x$ in $\phi$, 
            \item[($\forall$2)] $\forall x (\phi \to \psi) \to (\phi \to \forall \psi)$, where $x$ is not free in $\phi$,
            \item[($\exists$2)] $\forall x (\psi \to \phi) \to (\exists x \psi \to \phi)$, where $x$ is not free in $\phi$,
            \item[(Lin)] $\forall x (\psi \vee \phi) \to ((\forall x \psi) \vee \phi)$, where $x$ is not free in $\phi$.
        \end{description}
    \item Inference rules: 
        $$ \text{Modus Ponens:  } \frac{ \phi, \phi \to \psi}{ \phi} $$
        $$ \text{Generalization:  }  \frac{ \phi}{ \forall x \phi}$$
\end{itemize}

Let $\Gamma$ be a set of formulas.
As usual we write $\Gamma \vdash_{BL\forall} \phi$ whenever there is a proof of $\phi$ from $\Gamma$ in BL$\forall$.

\subsection{Continuous t-norms and BL-algebras}

Let $ \cdot : [0,1]^2 \to [0,1]$ be an operation, where $[0,1]$ is the real interval with its usual order. We say $\cdot$ is a \textit{t-norm} if the following conditions hold for $x,y,z \in [0,1]$:
\begin{itemize}
    \item $x \cdot  y = y \cdot  x$,
    \item $x\cdot (y\cdot z)=(x\cdot y)\cdot z$,
    \item $x\cdot y \leq x\cdot z$ if $y\leq z$,
    \item $x\cdot 1 = x$.
\end{itemize}

it is known that if $\cdot$ is continuous (with respect to the usual topology on $[0,1]$):
\begin{itemize}
    \item we can define an operation $\to \colon [0,1]^2 \to [0,1]$, the \textit{residuum} of $\cdot$, by 
        \[
            x \cdot y \leq z \iff x \leq y \imp z
        \]
 for all $x,y \in [0,1]$;   
    \item $ x \wedge y := \min\{ x,y \} = x \cdot (x \to y)$ for all $x,y \in [0,1]$;
    \item $ x \vee y := \max\{ x,y \} = ((x \to y)\to y ) \wedge ((y\to x) \to x)$ for all $x,y \in [0,1]$.
\end{itemize}
Thus, we can define an algebra $([0,1], \cdot, \to, \wedge, \vee, 0, 1)$.
We will also refer to this algebra as a continuous t-norm, and sometimes also denote it as $([0,1], \cdot)$. 

Prominent continuous t-norms include:
\begin{itemize}
    \item the \L ukasiewicz t-norm, defined as $a\cdot b = \max \{  a+b-1, 0 \} $,
    \item the product t-norm, which is the usual real product,
    \item the Gödel t-norm, given by $a \cdot b = \min \{ a,b \}$.
\end{itemize}

The continuous t-norms are an example of complete totally ordered \textit{BL-algebras}. 
We will assume that the reader is familiar with the basic properties of BL-algebras (see for example \cite{Hajek98}).
It is known that the class of all continuous t-norms generates the class of all BL-algebras as a variety \cite{CEGT00}.

We will also make use of the concept of \textit{hoops} in our work, so we will provide a brief introduction to them.

A \textit{hoop} is defined as an algebra $\mathbf{H}=(H,\cdot,\to,1)$ such that $(H,\cdot,1)$ is a commutative monoid and for all $x,y\in H$:
\begin{itemize} 
\item $x\to x =1$.
\item $x \cdot (x \to y) = y \cdot (y \to x)$.
\item $x \to (y \to z) = (x\cdot y ) \to z$.
\end{itemize} 
If a hoop satisfies $(x\to y) \to y = (y \to x) \to x$, it is referred to as a \textit{Wajsberg hoop}.

Let $I$ be a totally ordered set. 
For all $i\in I$ let $\mathbf{H}_i$ be a hoop such that for $i\neq j$, $A_i \cap A_j = \{1\}$.
Then $\bigoplus \mathbf{H}_i$ (the \textit{ordinal sum} of the familiy $(\mathbf{H}_i)_{i\in I}$) is the structure whose base set is $\bigcup H_i$ and the operations are
\begin{equation*}
x\to y = 
\begin{cases}
x \to^{\mathbf{H}_i} y  &\text{if } x,y \in H_i, \\
y                       &\text{if } x \in H_i \text{ and } y\in H_j \text{ with } i>j,\\
1                       &\text{if } x \in H_i-\{1\} \text{ and } y\in H_j \text{ with } i<j,
\end{cases}
\end{equation*}
\begin{equation*}
x\cdot y = 
\begin{cases}
x \cdot^{\mathbf{H}_i} y  &\text{if } x,y \in H_i, \\
y                         &\text{if } x \in H_i \text{ and } y\in H_j-\{1\} \text{ with } i>j,\\
x                         &\text{if } x \in H_i-\{1\} \text{ and } y\in H_j \text{ with } i<j.
\end{cases}
\end{equation*}

A key result that will be particularly useful is the following lemma (see Busaniche \cite{Busaniche2005}):
\begin{lemma} \label{decomposition}
Each BL-chain $\mathbf{A}$ is an ordinal sum of totally ordered Wajsberg hoops. 
\end{lemma}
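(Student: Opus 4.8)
The plan is to recover the summands from the idempotent elements of $\mathbf{A}$, which will play the role of the boundaries between consecutive Wajsberg hoops. Write $\idp(\mathbf{A})=\{e\in A : e\cdot e=e\}$ and note that $0,1\in\idp(\mathbf{A})$. A preliminary observation I would record is that every idempotent $e$ acts as a meet-multiplier, i.e.\ $e\cdot x=e\wedge x$ for all $x$: the inequality $e\cdot x\le e\wedge x$ is immediate, and applying divisibility $e\wedge x=e\cdot(e\to x)$ together with $e\cdot e=e$ gives $e\cdot(e\wedge x)=e\wedge x$, whence $e\wedge x\le e\cdot x$. From this one reads off that $x\le e$ implies $e\cdot x=x$ and $e\to x=x$, while $x\ge e$ implies $e\cdot x=e$. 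Using the total order I would then define an equivalence relation by setting $a\sim b$ iff there is no idempotent $e$ with $\min\{a,b\}<e\le\max\{a,b\}$. Reflexivity and symmetry are clear, and transitivity follows at once because, for $a\le b\le c$, any idempotent in $(a,c]$ lies either in $(a,b]$ or in $(b,c]$. Each $\sim$-class is therefore a convex subset of $\mathbf{A}$, the classes are linearly ordered by the order of $\mathbf{A}$, and $\{1\}$ is the top class; let $I$ be the chain of all classes $C\neq\{1\}$, and for each such $C$ put $H_C=C\cup\{1\}$.

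The heart of the argument is to prove that each $\mathbf{H}_C$, equipped with the restrictions of $\cdot$ and $\to$, is a totally ordered Wajsberg hoop. Closure of $H_C$ under the operations and the hoop axioms (commutative monoid, $x\to x=1$, divisibility $x\cdot(x\to y)=x\wedge y=y\cdot(y\to x)$, and residuation $x\to(y\to z)=(x\cdot y)\to z$) are inherited from $\mathbf{A}$ once one checks that applying an operation to elements of $C$ cannot produce an element outside $C\cup\{1\}$; this is where convexity and the absence of a separating idempotent are used. The decisive point is the Wajsberg identity $(x\to y)\to y=(y\to x)\to x$. Since $\mathbf{A}$ is a chain we may assume $x\le y$, so $x\to y=1$ and the left side equals $y$; the identity thus reduces to showing $(y\to x)\to x=y$ inside $C$. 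Equivalently, $\mathbf{H}_C$ has no idempotents other than its bounds, and the required fact is that a totally ordered hoop of this kind is Wajsberg, being either cancellative or an MV-chain.

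I expect this last reduction to be the main obstacle: one must show that the absence of an idempotent strictly inside $C$ forces the Wajsberg law, by arguing that any failure of $(y\to x)\to x=y$ (or, in the unbounded case, of cancellation) would produce a nontrivial idempotent strictly between $x$ and $y$, contradicting $x\sim y$. Once every $\mathbf{H}_C$ is known to be a totally ordered Wajsberg hoop, assembling the ordinal sum is routine. For $C<C'$ in $I$ and elements $x\in C$, $y\in C'$ there is an idempotent $e$ with $x<e\le y$; then $x\to y=1$ since $x\le y$, while $y\to x=x$ because $x\le y\to x\le e\to x=x$, and $x\cdot y=x$ because $x=e\cdot x\le y\cdot x\le 1\cdot x=x$. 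These equalities are exactly the ordinal-sum clauses, whereas inside a single class the operations are by construction those of $\mathbf{H}_C$ and all copies of $1$ are identified. Hence $\mathbf{A}=\bigoplus_{C\in I}\mathbf{H}_C$, the desired decomposition.
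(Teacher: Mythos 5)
There is a genuine gap, and it sits exactly where you predicted it would: the claim that a class with no interior idempotents yields a hoop that ``is Wajsberg, being either cancellative or an MV-chain'' is false, and with it the construction collapses. Take the product t-norm chain $([0,1],\cdot)$. Its only idempotents are $0$ and $1$, so under your relation ($a\sim b$ iff no idempotent lies in $(\min\{a,b\},\max\{a,b\}]$) all of $[0,1)$ is a single class and $H_C=[0,1]$ is the entire product chain. But the Wajsberg identity fails there: for $0<y<1$ one has $(0\to y)\to y = 1\to y = y$, whereas $(y\to 0)\to 0 = 0\to 0 = 1$. The correct decomposition of this chain is $\{0,1\}\oplus(0,1]$, a two-element MV-chain followed by a cancellative Wajsberg hoop, and this example exposes the structural reason your invariant is wrong: the boundary between consecutive summands of the ordinal sum need not be witnessed by any idempotent element of $\mathbf{A}$ (here $(0,1]$ has no minimum, and no idempotent of the algebra separates $0$ from it). Your final verification of the ordinal-sum clauses likewise relies on producing a separating idempotent $e$ with $x<e\le y$ for elements of distinct classes, so it too fails on this example. (A minor side slip: ``$x\le e$ implies $e\to x=x$'' is false at $x=e$, where $e\to e=1$; this is harmless where you use it, since there $x<e$.)

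For comparison: the paper itself gives no proof of this lemma at all — it is quoted from Busaniche \cite{Busaniche2005}, who removed a saturation hypothesis from the earlier decomposition theorem of Agliano and Montagna — and the known proofs use a genuinely finer equivalence than idempotent gaps: in essence, $x\sim y$ iff $x=y$, or $x<y$ and $y\to x>x$, or $y<x$ and $x\to y>y$. On the product chain this correctly splits $0$ from every $y>0$ (since $y\to 0=0\not>0$) while gluing $(0,1]$ together (for $0<x<y$, $y\to x=x/y>x$), and on a Gödel chain it makes every element a singleton, matching the decomposition into two-element hoops. The substantive work — transitivity of this relation, convexity and closure of the classes under $\cdot$ and $\to$, and the proof that each class with $1$ adjoined is cancellative or a bounded Wajsberg hoop — is precisely the content of the cited theorem, and none of it reduces to the idempotent-gap picture you set up.
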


\subsection{Models and validity}

We are now in a position to define what we will understand to be a model based on a BL-chain. 
Let $\mathbf{B}$ be a BL-chain (a totally ordered BL-algebra).  
A \textit{model based on} $\textbf{B}$, or $\textbf{B}$\textit{-structure}, is a pair $\textbf{M}=(M, .^\textbf{M})$ such that $M$ is a non empty set called  \textit{domain} and $.^\textbf{M}$ is a function that associates with each constant symbol $c$ in $\mathcal{L}$ an element $c^{\textbf{M}}$ in $M$ and with each $n$-ary predicate symbol $P$ a function $P^{\textbf{M}}:M^n \to B$. 

A \textit{valuation} is a function $v$ that maps variables of $\mathcal{L}$ into elements of $M$. If $v$ and $w$ are valuations, and $x$ a variable, we write $v \equiv_x w$ to
indicate that $v$ and $w$ are equal, except, maybe, at the variable $x$. 

Let $\textbf{M}$ be a $\textbf{B}$-structure and $v$ a valuation. 
For each constant $c$, we define $c^{\textbf{M},v}$ as $c^{\textbf{M}}$.
and for each variable $x$, we define $x^{\textbf{M},v}$ as $v(x)$.
If $\phi$ is a formula in the language $\mathcal{L}$, we define its value $\| \phi \|^{\textbf{M},v}$, inductively in the following manner:
\begin{itemize}
    \item if $\phi$ has the form $P(t_1, \ldots, t_n)$, then
        $$
        \| \phi \|^{\textbf{M},v} = P^{\textbf{M}}(t_1^{\textbf{M}, v}, \ldots, t_n^{\textbf{M}, v}),
        $$
    \item $\| 0 \|^{\textbf{M},v} = 0^\mathbf{B}$, $\| 1 \|^{\textbf{M},v} = 1^\mathbf{B}$.
    \item if $\phi= \psi \circ \chi$, where $\circ$ is a logical connective, then $\| \phi \|^{\textbf{M},v}$ is defined iff $\| \psi \|^{\textbf{M},v}$ and $\| \chi
        \|^{\textbf{M},v}$ are defined, and if this is the case, $\| \phi \|^{\textbf{M},v}= \| \psi \|^{\textbf{M},v} \circ ^\mathbf{B} \| \chi \|^{\textbf{M},v}$.
    \item if $\phi= \forall x \psi$, then $\| \phi \|^{\textbf{M},v}$ is defined iff
        \begin{itemize}
            \item for all valuations $w \equiv_x v$, $\| \psi \|^{\textbf{M},w}$ is defined and,
            \item $\bigwedge_{w \equiv_x v} \| \psi \|^{\textbf{M},w}$ exists in $\textbf{B}$. 
        \end{itemize}
        If that is the case, $\| \phi \|^{\textbf{M},v} = \bigwedge_{w \equiv_x v} \| \psi \|^{\textbf{M},w}$.
    \item if $\phi= \exists x \psi$, then $\| \phi \|^{\textbf{M},v}$ is defined iff
        \begin{itemize}
            \item for all valuations $w \equiv_x v$, $\| \psi \|^{\textbf{M},w}$ is defined and,
            \item $\bigvee_{w \equiv_x v} \| \psi \|^{\textbf{M},w}$ exists in $\textbf{B}$. 
        \end{itemize}
        If that is the case, $\| \phi \|^{\textbf{M},v} = \bigvee_{w \equiv_x v} \| \psi \|^{\textbf{M},w}$.
\end{itemize}

If $\phi$ is a sentence, that is, a formula with no free variables, we write $\| \phi \|^{\textbf{M}}$ instead of $\| \phi \|^{\textbf{M},v}$. 
A $\textbf{B}$-structure $\textbf{M}$ is called \textit{safe} if $\| \phi \|^{\textbf{M},v}$ is defined for all formulas $\phi$ and all valuations $v$. 
A safe $\textbf{B}$-structure $\textbf{M}$ is a \textit{model} of a formula $\phi$ provided $\| \phi \|^{\textbf{M},v} = 1$ for all valuations $v$; $\textbf{M}$ is a model of a set $\Gamma$ of formulas if it is a model of each $\gamma \in \Gamma$.

Let $\Gamma$ be a set of formulas, $\phi $ a formula and $\mathcal{K}$ a class of BL-chains. 
We say that $\phi$ \textit{is a semantic consequence of} $\Gamma$ \textit{in} $\mathcal{K}$, and we write $\Gamma \vDash_{\mathcal{K}} \phi$, provided for all $\textbf{B}\in \mathcal{K}$ and for each safe $\textbf{B}$-structure $\textbf{M}$, if $\textbf{M}$ is a model of $\Gamma$, then $\textbf{M}$ is a model of $\phi$. 
We say $\phi$ is \textit{valid in} $\mathcal{K}$  if $\vDash_{\mathcal{K}} \phi$, and that $\phi$ is
\textit{valid in a BL-chain} $\textbf{B}$ if it is valid in $\mathcal{K} = \{ \textbf{B} \}$. 

If $\mathcal{K}$ represents the class of all continuous t-norms, we will use the notation $\vDash$ to refer to $\vDash_{\mathcal{K}}$.

%definicion de la logica sintactica y sus propiedades mas importantes
\section{The predicate logic based on t-norms}
\label{definicionLogicaInf}

For the case where $\mathcal{K}$ is the class of all BL-chains, Hájek proved \cite{Hajek98} that the logic BL$\forall$ is strongly complete with respect to models based on $\mathcal{K}$, that is for all $\Gamma$ and all $\phi$ 
\[
    \Gamma \vdash_{BL\forall} \phi \iff \Gamma \vDash_{\mathcal{K}} \phi .
\]

Let $\mathcal{K}$ be the class of all t-norms. 
Let us briefly examine $\vDash_{\mathcal{K}}$. 
Suppose $\alpha$ and $\beta$ are sentences. 
It can be shown that $\{\alpha \imp \beta^n, n\in \mathbb{N}\} \vDash_{\mathcal{K}} \alpha \imp (\alpha \& \beta)$. 
However, there are sentences $\alpha,\beta$ such that for any finite subset $\Gamma$ of $\{ \alpha \imp \beta^n, n\in \mathbb{N} \}$, we have that $\Gamma \not\vDash_{\mathcal{K}} \alpha \to (\alpha \& \beta)$. 
(quizas en el libro de hajek o hay)
This shows that this logic is infinitary. 
Then we will need an infinitary syntactic logic if we want a strong completeness result.

We will define an infinitary logic extending Hájek's basic predicate logic adding an infinitary rule
\begin{equation}
    \tag{Inf}
    \label{Inf}
    \frac{\phi \vee (\alpha \to \beta^n):n \in \mathbb{N}}{\phi \vee (\alpha \to \alpha \& \beta)}
\end{equation}
and an axiom schema
\begin{equation}
    \tag{RC}
    \label{RC}
    \forall x (\chi \& \chi) \to (\forall x \chi) \&( \forall x \chi)     
\end{equation}
where $\phi, \alpha$ and $\beta$ are sentences and $\chi$ a formula.

Kułacka introduced rule \ref{Inf} in \cite{Kulacka18} to demonstrate the strong standard completeness of propositional basic logic.
The axiom \ref{RC} was studied by Montagna and Hájek in \cite{HajekMontagna08} where they exhibited a very close relation between continuous t-norms and models based on complete BL-chains where \ref{RC} is valid. 

We let $\vdash$ denote the extension of Hájek's basic predicate logic by the Infinitary Rule (Inf) and the axiom (RC).  

Let $\Gamma$ be a set of formulas. 
Formally, \textit{a proof from $\Gamma$} is a sequence of formulas $\{\phi_i \}_{i \leq \xi}$ for some ordinal $\xi$ (proof length) such that, for each $i$ with $i \leq \xi$, denoting $\mathcal{H}_i= \{ \phi_j : j<i\} $, at least one of the following conditions holds,
\begin{itemize}
    \item $\phi_i$ is an axiom of $\vdash_{BL\forall}$,
    \item $\phi_i$ is an instance of RC, 
    \item $\phi_i \in \Gamma$,
    \item there exists a formula $\chi$ such that $\chi \in \mathcal{H}_i$ and $\chi \to \phi_i \in \mathcal{H}_i$,
    \item there exists a formula $\chi$ such that $\forall x \chi = \phi_i$ and $\chi \in \mathcal{H}_i$,
    \item there exist formulas $\phi, \alpha, \beta$ such that $ \phi_i = \phi \vee (\alpha \to (\alpha \& \beta))$ and $\phi \vee (\alpha \to \beta^n) \in \mathcal{H}_i $ for each $n \in \mathbb{N}$. 
\end{itemize}

If $\phi$ is a formula, \textit{a proof of $\phi$ from $\Gamma$} is a proof from $\Gamma$ ending in $\phi$. 
We write $\Gamma \vdash \phi$ if there is a proof of $\phi$ from $\Gamma$. 

Given formulas $\phi,\psi$ and sets of formulas $\Gamma, \Delta$ it is easy to prove that $\vdash$ satisfies 
\begin{itemize}
    \item Reflexivity: if $\phi \in \Gamma $, then $\Gamma \vdash \phi $.
    \item Monotonicity: if $\Gamma \vdash \phi$ and $\Gamma \subseteq \Delta$, then $\Delta \vdash \phi$. 
    \item Transitivity: if $\Gamma \vdash \phi$ and $\Delta \vdash \theta$ for all $\theta \in \Gamma$, then $\Delta \vdash \phi$.
    \item if $\Gamma \vdash_{BL\forall} \phi$, then $\Gamma \vdash \phi$.
\end{itemize}

The \textit{lemma on constants} of first-order logic is also true in our logic. 
\begin{lemma}
    If $\Gamma \vdash \phi$ where $c$ is a constant not present in $\Gamma$ and $x$ does not appear free in $\phi $, then $\Gamma \vdash (\forall x)\phi\s{c}{x}$. 
\end{lemma}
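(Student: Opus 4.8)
The plan is to imitate the classical proof of generalization on a constant, adapted to the transfinite proofs of our calculus. First I would fix a proof $\{\phi_i\}_{i \le \xi}$ of $\phi$ from $\Gamma$ and choose a variable $y \neq x$ that does not occur anywhere in this proof (that such a $y$ can be found will be the delicate point, discussed at the end). Let $\sigma$ denote the operation of replacing every occurrence of the constant $c$ by $y$, and consider the transformed sequence $\{\phi_i\sigma\}_{i \le \xi}$. I would then show, by transfinite induction on $i$, that this sequence is again a proof from $\Gamma$; in particular its last formula witnesses $\Gamma \vdash \phi\sigma = \phi\s{c}{y}$.

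The inductive step amounts to checking that $\sigma$ carries each of the six admissible justifications of $\phi_i$ into an admissible justification of $\phi_i\sigma$. Because $y$ does not occur in the proof, for every variable $z$ generalized upon we have $z \neq y$, so $(\forall z\,\chi)\sigma = \forall z\,(\chi\sigma)$, and $\sigma$ commutes with the connectives; hence Modus Ponens and Generalization steps are preserved, and the same commutation shows that an instance of (RC), or of (Inf), is sent to another instance of the same schema (here one uses that in the formal definition of proof the components of (Inf) range over arbitrary formulas, so no appeal to the \emph{sentence} side-condition is needed). The propositional axioms are immediate, and for the first-order axioms one uses freshness of $y$ to guarantee that the relevant term stays substitutable and that substitutions commute, $(\psi\s{z}{t})\sigma = (\psi\sigma)\s{z}{t\sigma}$, so axioms are sent to axioms. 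Finally, since $c$ does not occur in $\Gamma$, each hypothesis used is fixed by $\sigma$ and remains a member of $\Gamma$.

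Once $\Gamma \vdash \phi\s{c}{y}$ is secured, I would apply Generalization to obtain $\Gamma \vdash \forall y\,\phi\s{c}{y}$ and then rename the bound variable. By ($\forall$1) we have $\vdash \forall y\,\phi\s{c}{y} \to (\phi\s{c}{y})\s{y}{x}$; since $y$ was chosen fresh and $x$ is not free in $\phi$, the variable $x$ is substitutable for $y$ in $\phi\s{c}{y}$ and $(\phi\s{c}{y})\s{y}{x} = \phi\s{c}{x}$ (this is exactly where the hypothesis on $x$ is used, and it is transparent when $x$ does not occur in $\phi$). Modus Ponens then gives $\Gamma \vdash \phi\s{c}{x}$, and a final Generalization yields $\Gamma \vdash \forall x\,\phi\s{c}{x}$, as required.

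The main obstacle is the transfinite induction of the second paragraph: unlike the finitary situation treated by Hájek, a proof here may have transfinite length and invoke (Inf) cofinally often, so one must verify uniformly along the whole ordinal-indexed sequence that substitution of a fresh variable for $c$ respects \emph{every} clause of the definition of proof. The genuinely delicate point is securing such a fresh $y$ in this infinitary setting, since a transfinite proof could in principle exhaust the countable stock of variables; I would address this by first normalizing the proof (renaming bound variables, and reserving a variable at the outset) so that some variable is left unused, after which no generalization step binds $y$ and no capture occurs. Everything else reduces to the routine commutation of substitution with the connectives and quantifiers.
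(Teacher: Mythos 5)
The paper states this lemma without proof, so there is no official argument to compare against; your strategy --- fix a proof, replace $c$ everywhere by a fresh variable $y$, check by transfinite induction that the result is still a proof of $\phi\s{c}{y}$ from $\Gamma$, then generalize and rename the bound variable --- is the standard route and surely the intended one. But the two points you yourself flag as delicate are genuine gaps, and one of them rests on a misreading. In the (Inf) case you argue that no sentence side-condition needs checking because the formal definition of proof says ``there exist formulas $\phi,\alpha,\beta$''; however, everywhere else the paper treats that wording as an oversight: the rule is introduced ``where $\phi,\alpha$ and $\beta$ are sentences,'' the (Inf) cases inside the disjunction lemmas and the Henkin construction are handled under the assumption that the components are sentences, and the closing remark states explicitly that the rule ``require[s] the formulas involved to be sentences,'' the unrestricted version becoming available only as a corollary of completeness. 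Under the sentence-restricted rule your transformed step fails outright: if $\alpha,\beta,\phi$ are sentences containing $c$, then after applying $\sigma$ they contain the free variable $y$, so the transformed step is no longer an instance of (Inf). Repairing this takes an actual idea --- for instance, generalizing the transformed premises on $y$ and re-instantiating at a fresh constant runs straight into a circular appeal to the very lemma being proved --- so this case cannot be waved through on the strength of the definition's word choice.

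The freshness issue is also deeper than your patch acknowledges. In the finitary setting a proof mentions finitely many variables and a fresh $y$ always exists; here a proof of transfinite length can genuinely exhaust the countable stock of variables, as you note. But ``normalizing the proof by renaming bound variables'' does not obviously preserve proofhood: the definition of proof is a literal syntactic condition (Modus Ponens needs the exact formulas $\chi$ and $\chi\to\phi_i$ among the $\mathcal{H}_i$, Generalization needs $\phi_i$ literally equal to $\forall x\chi$, and a hypothesis step needs $\phi_i$ literally in $\Gamma$), and an alphabetic variant of a member of $\Gamma$ need not belong to $\Gamma$; renaming bound variables in derived formulas likewise destroys these literal matches unless you interleave derivations of the relevant alpha-equivalences, which is itself another transfinite induction that would have to be carried out, not merely invoked. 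A cleaner repair is to restructure the induction so that no globally fresh variable is needed: prove by induction on proof length that $\Gamma\vdash\forall y\,\psi\s{c}{y}$ for every $y$ fresh for the single (finite) formula $\psi$, handling Modus Ponens and Generalization with BL$\forall$ theorems (distribution of $\forall$ over $\to$, quantifier exchange, renaming of the bound variable) --- though note the (Inf) case then hits the same sentence-restriction obstacle as above. Finally, a small but real slip at the end: the hypothesis says $x$ does not appear \emph{free} in $\phi$, which does not exclude bound occurrences of $x$, so the identity $(\phi\s{c}{y})\s{y}{x}=\phi\s{c}{x}$ and the substitutability of $x$ for $y$ can fail by capture; you remark that the step is transparent when $x$ does not occur in $\phi$ at all, but the general case needs the same care as the rest.
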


\begin{remark}
    Note that, if $\mathcal{L}$ is a predicate language, $\Gamma \cup \{ \phi \}$ is a set of formulas in $\mathcal{L}$, and $\mathcal{L}^\prime$ is an extension of $\mathcal{L}$ obtained by adding infinite new constants, then there is a proof of $\phi$ from $\Gamma$ in the language $\mathcal{L}$ if and only if there is a proof of $\phi$ from $\Gamma$ in the extension $\mathcal{L}^\prime$. 
    This will be useful in the upcoming sections.
\end{remark}

\subsection{Syntactic lemmas}
For our completeness result, the following lemmas concerning the syntactic calculus are required.

\subsubsection{Properties of the disjunction}

\begin{lemma}
    \label{weak_disyuncion}
    Let $\alpha$ be a sentence, $\psi$ and $\beta$ formulas and $\Gamma$ a set of formulas.
    $$
    \text{If $\Gamma , \beta  \vdash \psi$, then $\Gamma , \alpha \vee \beta  \vdash \alpha \vee \psi$}.
    $$
\end{lemma}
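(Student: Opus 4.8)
The plan is to fix a proof $\{\phi_i\}_{i\le\xi}$ of $\psi$ from $\Gamma\cup\{\beta\}$ and to show, by transfinite induction on $i$, that $\Gamma,\alpha\vee\beta\vdash\alpha\vee\phi_i$ for every $i\le\xi$; reading this off at $i=\xi$ (where $\phi_\xi=\psi$) gives the lemma. Before entering the induction I would record the propositional facts about $\vee$ used throughout, all of which are already provable in BL$\forall$ and hence available for $\vdash$: that $\phi\to(\alpha\vee\phi)$ is a theorem, that $\vee$ is commutative and associative up to provable equivalence, and — the one genuinely load-bearing ingredient, needed for modus ponens — that
\[
\vdash_{BL\forall}(\alpha\vee\chi)\to\big((\alpha\vee(\chi\to\phi))\to(\alpha\vee\phi)\big).
\]
This last implication holds in every BL-chain by a short case analysis on the total order (using $x\cdot(x\to y)=x\wedge y$), so it is a theorem by Hájek's completeness of BL$\forall$ with respect to BL-chains.

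I would then treat the cases of the definition of proof. For the leaf cases in which $\phi_i$ is a BL$\forall$ axiom, an instance of (RC), or a member of $\Gamma$, one has $\Gamma,\alpha\vee\beta\vdash\phi_i$ and obtains $\alpha\vee\phi_i$ from $\phi\to(\alpha\vee\phi)$. The distinguished leaf case is $\phi_i=\beta$: here $\alpha\vee\phi_i$ is literally the hypothesis $\alpha\vee\beta$, so there is nothing to prove — this is where the disjunct $\alpha$ is ``injected''. For modus ponens, with premises $\chi$ and $\chi\to\phi_i$ earlier in the proof, the induction hypothesis yields $\Gamma,\alpha\vee\beta\vdash\alpha\vee\chi$ and $\Gamma,\alpha\vee\beta\vdash\alpha\vee(\chi\to\phi_i)$, and two applications of modus ponens to the displayed theorem deliver $\alpha\vee\phi_i$.

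The two cases peculiar to this logic come next. For generalization, $\phi_i=\forall x\chi$ with $\chi$ earlier, the induction hypothesis gives $\Gamma,\alpha\vee\beta\vdash\alpha\vee\chi$. Since $\alpha$ is a sentence, $x$ is not free in $\alpha$, so after commuting I apply the generalization rule to get $\forall x(\chi\vee\alpha)$ and then axiom (Lin) to get $(\forall x\chi)\vee\alpha$, i.e.\ $\alpha\vee\phi_i$. For the infinitary rule, say $\phi_i=\phi'\vee(\alpha'\to(\alpha'\,\&\,\beta'))$ with $\phi'\vee(\alpha'\to(\beta')^n)$ earlier for every $n$; the induction hypothesis together with associativity and commutativity of $\vee$ yields $(\alpha\vee\phi')\vee(\alpha'\to(\beta')^n)$ for every $n$. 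Since $\alpha$ and $\phi'$ are sentences, $\alpha\vee\phi'$ is a sentence, and $\alpha',\beta'$ are sentences, so (Inf) applies with the premise disjunct $\alpha\vee\phi'$ and produces $(\alpha\vee\phi')\vee(\alpha'\to(\alpha'\,\&\,\beta'))$, which re-associates to $\alpha\vee\phi_i$.

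The routine part is the bookkeeping with associativity and commutativity of $\vee$. The two substantive points are the BL-theorem driving the modus ponens step — whose verification on chains is the only real propositional computation — and the generalization step, which is exactly where the assumption that $\alpha$ is a sentence is used, through axiom (Lin). I expect the main obstacle to be the infinitary-rule case: although it is handled by the same associativity move, one must carefully check that all the sentence-hood side conditions for re-applying (Inf) are met, since this is the single case genuinely specific to the new calculus.
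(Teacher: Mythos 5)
Your proof is correct and takes essentially the same route as the paper's: induction along the (ordinal-indexed) proof, with the leaf case $\phi_i=\beta$ handled by reflexivity, the same BL$\forall$ theorem $(\alpha\vee\chi)\to((\alpha\vee(\chi\to\phi))\to(\alpha\vee\phi))$ for modus ponens, Generalization plus (Lin) for the quantifier case (using that $\alpha$ is a sentence), and associativity plus re-application of (Inf) for the infinitary case. Your explicit checks — the semantic verification of the distribution theorem via completeness of BL$\forall$ over BL-chains, and the sentence-hood of $\alpha\vee\phi'$ needed to re-apply (Inf) — are details the paper leaves implicit, but they are exactly right.
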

\begin{proof}
Let us proceed using induction on the proof length of $\psi$ from $\Gamma, \beta$.

In the case where the proof length is $1$, it follows that $\psi$ is either an axiom, a member of $\Gamma$, or equal to $\beta$. 
\begin{itemize}
\item Suppose $\psi$ is an axiom. 
We have that $\vdash \psi \to (\alpha \vee \psi)$ (is a theorem of $\vdash_{BL\forall}$), then by Modus Ponens we get that $\vdash \alpha \vee \psi$. 
Monotonocity then give us $\Gamma, \alpha \vee \beta \vdash \alpha \vee \psi$. 
\item Let $\psi$ be an element of the set $\Gamma$. 
It follows that $\Gamma, \alpha \vee \beta \vdash \psi$ due to reflexivity. 
Moreover, since $\psi \to (\alpha \vee \psi)$ is a theorem, we can conclude by Modus Ponens that $\Gamma, \alpha \vee \beta \vdash \alpha \vee \psi$.        
\item Suppose $\psi = \beta$. 
A straightforward application of reflexivity yields $\Gamma ,\alpha\vee \beta  \vdash \alpha\vee \beta$.
\end{itemize}

Now, let us move on to the inductive step.
Let $\psi$ be a formula with a proof from $\Gamma ,\beta$ with length $\eta$, where $\eta > 1$. We will assume our desired result holds for ordinals smaller than $\eta$.
\begin{itemize}
\item If $\psi$ follows by Modus Ponens from formulas $\theta \to \psi$ and $\theta$. 
Then $\Gamma ,\beta $ proves $\theta \to \psi$ and $\theta $ with proof length smaller than $\eta$. 
By the inductive hypothesis $\Gamma ,\alpha\vee \beta \vdash \alpha \vee( \theta \to \psi)$ and $\Gamma ,\alpha\vee \beta \vdash\alpha \vee \theta$.
Since $(\alpha \vee \theta) \to ( (\alpha \vee (\theta \to \psi) ) \to (\alpha \vee \psi) )$ is a theorem of $\vdash_{BL\forall}$, we conclude that $\Gamma ,\alpha\vee \beta \vdash \alpha \vee \psi$. 
\item If $\psi$ follows by Generalization, then $\psi$ must be of the form $\forall x \chi$ and $\Gamma ,\beta  \vdash \chi$ with a proof length smaller than $\eta$. 
By the inductive hypothesis, we have $\Gamma ,\alpha\vee \beta  \vdash \alpha \vee \chi$, and through Generalization, we get $\Gamma, \alpha \vee \beta \vdash \forall x (\alpha \vee \chi)$.
Given that $\alpha$ is a sentence, we can apply the axiom (Lin) to derive $\Gamma , \alpha\vee \beta  \vdash  \alpha \vee \forall x \chi$.
\item If $\psi$ is a result of the Infinitary Rule, then $\psi$ must have the form $\rho \vee (\gamma \to (\gamma \& \delta))$ for some sentences $\rho ,\gamma ,\delta$ and $\Gamma ,\beta  \vdash \rho  \vee (\gamma  \to \delta ^i)$ with proof length smaller than $\eta$ for all $i$. 
By the inductive hypothesis, we have $\Gamma ,\alpha \vee \beta  \vdash \alpha \vee (\rho  \vee( \gamma  \to \delta ^i))$ for all $i$. 
Through associativity and the Infinitary Rule, we conclude that $\Gamma ,\alpha \vee \beta  \vdash \alpha \vee \psi$. 
\end{itemize}
\end{proof}

\begin{lemma}
    \label{disyuncion}
    Let $\alpha$ be a sentence, $\phi, \psi$ and $\beta$ formulas and $\Gamma$ a set of formulas.
    $$
    \text{If $\Gamma , \alpha \vdash \phi$ and $\Gamma , \beta  \vdash \psi$, then $\Gamma , \alpha \vee \beta  \vdash \phi \vee \psi$}.
    $$
\end{lemma}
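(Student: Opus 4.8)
The plan is to derive the statement from Lemma~\ref{weak_disyuncion} by two applications together with the transitivity of $\vdash$. First I would apply Lemma~\ref{weak_disyuncion} to the hypothesis $\Gamma,\beta\vdash\psi$, disjoining the sentence $\alpha$, to obtain $\Gamma,\alpha\vee\beta\vdash\alpha\vee\psi$. This reduces the goal to ``upgrading'' the disjunct $\alpha$ into $\phi$: by transitivity it now suffices to prove $\Gamma,\alpha\vee\psi\vdash\phi\vee\psi$, since chaining this with the derivation just obtained yields $\Gamma,\alpha\vee\beta\vdash\phi\vee\psi$.

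For the upgrade I would apply Lemma~\ref{weak_disyuncion} a second time, now to the hypothesis $\Gamma,\alpha\vdash\phi$, disjoining $\psi$; this gives $\Gamma,\psi\vee\alpha\vdash\psi\vee\phi$. Because $\vee$ is provably commutative in $\vdash_{BL\forall}$ and a hypothesis may be replaced by a provably equivalent one (from $\vdash(\psi\vee\alpha)\leftrightarrow(\alpha\vee\psi)$ and transitivity), this reads $\Gamma,\alpha\vee\psi\vdash\phi\vee\psi$, which is exactly the upgrade needed. So the skeleton of the argument is short: two applications of the weak disjunction lemma, a use of commutativity of $\vee$, and transitivity.

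The hard part is that the second application is not immediately licit: Lemma~\ref{weak_disyuncion} permits only a \emph{sentence} to be disjoined — its proof invokes the axiom (Lin) and the Infinitary Rule, both of which are stated for sentences — whereas $\psi$ is an arbitrary formula. Thus the clean two-step argument is valid verbatim only when $\psi$ is a sentence. To treat the general case I would reduce to that situation: replacing the free variables occurring in $\phi,\psi,\beta$ by fresh constants (justified by the Remark following the lemma on constants, after renaming bound variables to avoid capture) produces an instance in which $\phi,\psi,\beta$ are sentences, which is settled as above, and then the variables are substituted back. The point I expect to be most delicate, and would treat with care, is precisely this variable bookkeeping — the interaction of the substitution with Generalization steps and with free variables shared with $\Gamma$. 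An alternative that sidesteps the reduction is to prove the equivalent claim $\Gamma,\alpha\vee\psi\vdash\phi\vee\psi$ directly by induction on the length of the proof of $\phi$ from $\Gamma,\alpha$: the base case $\phi=\alpha$ is immediate by reflexivity, Modus Ponens is absorbed by a disjunction-distribution theorem of $\vdash_{BL\forall}$, and the Generalization and Infinitary steps are exactly where the sentence restrictions (via (Lin) and (Inf)) must be verified — the same obstacle reappearing there.
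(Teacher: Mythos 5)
Your skeleton (two applications of Lemma~\ref{weak_disyuncion}, commutativity of $\vee$, and transitivity) is fine, and you correctly isolate the crux: the second application requires the disjoined formula $\psi$ to be a sentence. But the repair you propose for general $\psi$ --- replacing the free variables of $\phi,\psi,\beta$ by fresh constants and then ``substituting back'' --- fails in both directions. First, $\Gamma,\beta\vdash\psi$ does \emph{not} yield $\Gamma,\beta\s{x}{c}\vdash\psi\s{x}{c}$: Generalization is unrestricted in this calculus (it may be applied to hypotheses), so for instance $P(x)\vdash\forall x\,P(x)$, whereas $P(c)\nvdash\forall x\,P(x)$ by soundness (take a model with $P^{\mathbf{M}}(c^{\mathbf{M}})=1$ and $P^{\mathbf{M}}(m)<1$ for some other $m$). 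So you cannot even set up the sentence instance to which the two-step argument would apply. Second, the return trip is blocked as well: the lemma on constants requires $c$ to be absent from the hypotheses, but your fresh constant occurs in the modified hypothesis $\alpha\vee\beta\s{x}{c}$. Your fallback --- redoing the induction of Lemma~\ref{weak_disyuncion} --- is in fact the paper's route, but you leave the sentence restriction at the Generalization and Infinitary steps unresolved, and that is exactly where the work lies; so as written the proposal has a genuine gap at its central point.

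The missing device is universal closure, not fresh constants, and it exploits the very feature (unrestricted Generalization) that breaks your substitution: only $\psi$ needs to become a sentence. From $\Gamma,\beta\vdash\psi$, Generalization gives $\Gamma,\beta\vdash\forall\vec{x}\,\psi$, where $\forall\vec{x}\,\psi$ is the universal closure, a sentence. Your two applications of Lemma~\ref{weak_disyuncion} (the second now licit, disjoining the sentence $\forall\vec{x}\,\psi$) plus transitivity then yield $\Gamma,\alpha\vee\beta\vdash\phi\vee\forall\vec{x}\,\psi$, and since $(\phi\vee\forall\vec{x}\,\psi)\to(\phi\vee\psi)$ is a theorem by ($\forall$1), Modus Ponens finishes. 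With this one-line patch your argument goes through and is actually shorter than the paper's proof, which instead repeats the induction of Lemma~\ref{weak_disyuncion} on the proof of $\phi$ from $\Gamma,\alpha$ (invoking Lemma~\ref{weak_disyuncion} only for the base case $\phi=\alpha$) and uses this same closure trick locally in the Generalization case: there it passes to $\Gamma,\beta\vdash\forall x\,\psi$, applies the inductive hypothesis, Generalization and (Lin), and then the theorem $(\forall x\chi\vee\forall x\psi)\to(\forall x\chi\vee\psi)$.
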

\begin{proof}
Let us proceed with an induction on the proof length of the formula $\phi$.

In the case where the proof length is 1, the formula $\phi$ can either be an axiom, a member of the set $\Gamma$, or equal to the formula $\alpha$. 
The methods to prove these cases are similar to those demonstrated in the previous lemma (see Lemma \ref{weak_disyuncion}), with the exception of the case where $\phi = \alpha$. 
In this particular scenario, Lemma \ref{weak_disyuncion} addresses this case.

Let us proceed to the inductive step.
Let $\phi$ be a formula with a proof from $\Gamma ,\alpha$ with length $\eta$, where $\eta > 1$. 
We will assume our desired result holds for ordinals smaller than $\eta$.

We will focus on the case where $\phi$ is derived through Generalization, as the other cases are analogous to those already proven in the previous lemma (refer to Lemma \ref{weak_disyuncion}).
If $\phi$ is a consequence of Generalization, then $\phi$ is of the form $\forall x \chi$ for some formula $\chi$ such that $\Gamma ,\alpha \vdash \chi$ with a proof length smaller than $\eta$.
By applying generalization to $\Gamma, \beta \vdash \psi$, we derive that $\Gamma, \beta \vdash \forall \psi$, and by our inductive hypothesis, we conclude that $\Gamma, \alpha \vee \beta \vdash \chi \vee \forall x \psi$. 
Let us assume that $\psi$ contains a single free variable $x$, which makes $\forall x \psi$ a sentence.
This assumption is sufficient, as the case with more free variables is straightforward.
Therefore, by Generalization and (Lin), $\Gamma, \alpha \vee \beta \vdash \forall x \chi \vee \forall x \psi$.
Finally, as $(\forall x \chi \vee \forall x \psi) \to( \forall x \chi \vee \psi)$ is a theorem, we can deduce that $\Gamma ,\alpha \vee \beta  \vdash \forall \chi \vee \psi$.
\qedhere
\end{proof}
\begin{corollary}
    \label{prelinearity}
    Let $\alpha$ and $\beta $ be sentences, and $\phi$ a formula. If  $\Gamma , \alpha \to \beta  \vdash \phi$  and $\Gamma , \beta \to \alpha \vdash \phi$, then  $\Gamma \vdash \phi$.
\end{corollary}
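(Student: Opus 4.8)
The plan is to reduce the statement to a single application of Lemma~\ref{disyuncion}, combined with two purely propositional facts already available in $\vdash_{BL\forall}$: the prelinearity theorem and the idempotency of $\vee$.

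First I would instantiate Lemma~\ref{disyuncion} by letting the sentence $\alpha \to \beta$ play the role of its ``$\alpha$'', the sentence $\beta \to \alpha$ play the role of its ``$\beta$'', and taking $\phi$ for both its ``$\phi$'' and its ``$\psi$''. This is legitimate because $\alpha$ and $\beta$ are sentences, so $\alpha \to \beta$ is a sentence, which is exactly what the lemma requires of its first argument. The two hypotheses of the lemma then coincide with our assumptions $\Gamma , \alpha \to \beta \vdash \phi$ and $\Gamma , \beta \to \alpha \vdash \phi$, and its conclusion reads
\[
\Gamma, (\alpha \to \beta) \vee (\beta \to \alpha) \vdash \phi \vee \phi.
\]

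Next I would invoke the prelinearity theorem of BL$\forall$, namely $\vdash_{BL\forall} (\alpha \to \beta) \vee (\beta \to \alpha)$, which is a consequence of axiom (A7) together with the definition of $\vee$; since $\Gamma \vdash_{BL\forall} \theta$ implies $\Gamma \vdash \theta$, we obtain $\Gamma \vdash (\alpha \to \beta) \vee (\beta \to \alpha)$. Combining this with the displayed derivation via the transitivity property of $\vdash$ discharges the extra hypothesis: $\Gamma$ proves every formula of $\Gamma \cup \{(\alpha \to \beta) \vee (\beta \to \alpha)\}$ (the members of $\Gamma$ by reflexivity and monotonicity, the remaining disjunction by prelinearity), so transitivity yields $\Gamma \vdash \phi \vee \phi$. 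Finally, since $\phi \vee \phi \to \phi$ is a theorem of BL$\forall$, one application of Modus Ponens gives $\Gamma \vdash \phi$, as desired.

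I do not anticipate a genuine obstacle here. The only points requiring care are placing $\alpha \to \beta$ (rather than $\beta \to \alpha$) in the sentence-slot of Lemma~\ref{disyuncion}, and using the stated transitivity formulation correctly to cut the prelinearity theorem against the auxiliary hypothesis. Both the prelinearity theorem and the idempotency of disjunction are propositional theorems of $\vdash_{BL\forall}$, so no new infinitary reasoning is involved and the argument stays entirely within the already-established machinery.
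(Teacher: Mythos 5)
Your proof is correct and follows essentially the same route as the paper: apply Lemma~\ref{disyuncion} with $\alpha \to \beta$ in the sentence slot to get $\Gamma, (\alpha\to\beta)\vee(\beta\to\alpha) \vdash \phi\vee\phi$, then discharge the extra hypothesis via the BL$\forall$ prelinearity theorem and contract $\phi\vee\phi$ to $\phi$. The only difference is the order of the last two steps (you cut first, the paper contracts first), which is immaterial.
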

\begin{proof}
    By the previous lemma, we have $\Gamma , (\alpha\to \beta)  \vee (\beta \to \alpha) \vdash \phi \vee \phi$.
    Then $\Gamma , (\alpha\to \beta)  \vee (\beta \to \alpha) \vdash \phi$ .
    And since $(\alpha\to \beta)  \vee (\beta  \to \alpha)$ is a theorem of BL$\forall$, we have that $\Gamma  \vdash \phi$.
\end{proof}

\subsubsection{A Henkin construction}

Let $\mathcal{L^\prime}$ be a predicate language and let $\Gamma $ be a set of formulas of $\mathcal{L^\prime}$. 
Let $\cl(\Gamma)$ denote the set of all formulas $\phi$ of $\mathcal{L^\prime}$ for which $\Gamma \vdash \phi $ and that exclusively contain constants present in $\Gamma$.

A \textit{theory} is any set of formulas. 
A \textit{prelinear theory} or \textit{complete theory} in the language $\mathcal{L^\prime}$ is a theory $\Gamma$ that satisfies for every pair of sentences $\alpha$ and $\beta$ that
\[
    \Gamma \vdash \alpha \to \beta \text{ or }\Gamma \vdash \beta \to \alpha.
\]
A \textit{Henkin theory} $\Gamma$ in  the language $\mathcal{L^\prime}$  is a theory $\Gamma$ that satisfies for all sentences $\forall x \chi $ that
\[\text{if} \quad \Gamma\nvdash \forall x \chi,  \quad \text{then} \quad  \Gamma\nvdash \chi\s{x}{c} \]
for some constant $c $.

\begin{lemma}
    \label{HenkinConstruction}
    Let $\Gamma $ be a theory and $\phi$ a sentence in the language $\mathcal{L}$ such that $\Gamma \nvdash \phi$. 
    Then, there exist a language $ \mathcal{L}^{\prime}$ extending $\mathcal{L}$ and a theory $\Gamma ^* $ in $\mathcal{L}^\prime$ containing $\Gamma$ such that $\Gamma ^* \nvdash \phi$  and $\Gamma ^*$  is prelinear and Henkin in $\mathcal{L}^\prime$.
\end{lemma}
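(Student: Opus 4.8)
The plan is to carry out a Henkin-style construction, extending both the language and the theory in a recursion that keeps $\phi$ unprovable at every stage while forcing prelinearity and the Henkin property. First I would pass to a language $\mathcal{L}'$ obtained from $\mathcal{L}$ by adjoining countably many new constants; by the Remark this does not affect derivability, so $\Gamma\nvdash\phi$ still holds over $\mathcal{L}'$. Since $\mathcal{L}'$ is countable, there are only countably many \emph{requirements} to meet: one for each pair of sentences $(\alpha,\beta)$ (to decide prelinearity) and one for each sentence $\forall x\chi$ (to install a Henkin witness). I would then build an increasing chain $\Gamma=\Gamma_0\subseteq\Gamma_1\subseteq\cdots$ with every $\Gamma_n\nvdash\phi$, handling the requirements, and set $\Gamma^*=\bigcup_n\Gamma_n$.

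The two kinds of successor step use the tools already available. For a prelinearity requirement $(\alpha,\beta)$, Corollary \ref{prelinearity} says that $\Gamma_n\cup\{\alpha\to\beta\}$ and $\Gamma_n\cup\{\beta\to\alpha\}$ cannot both prove $\phi$ (otherwise $\Gamma_n\vdash\phi$), so I add to $\Gamma_n$ whichever of the two implications preserves $\nvdash\phi$. For a Henkin requirement $\forall x\chi$, I choose a constant $c$ not occurring in $\Gamma_n$, in $\phi$, or in $\chi$, and add the witness axiom $\chi\s{x}{c}\to\forall x\chi$. Once this axiom is present, $\Gamma^*\vdash\chi\s{x}{c}$ implies $\Gamma^*\vdash\forall x\chi$ (the converse implication being $(\forall 1)$), so the contrapositive gives exactly the Henkin property with witness $c$, robustly under all later extensions. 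That adding the witness axiom preserves $\nvdash\phi$ is the conservativity of fresh-constant Henkin axioms; I would obtain it from the lemma on constants by the usual device of replacing $c$ by a fresh variable and generalizing, the one point requiring care being the compatibility of this substitution with the infinitary rule (whose side formulas must remain sentences). Granting this, $\Gamma^*$ is prelinear, because every pair of sentences is decided at some stage, and Henkin, because every universal sentence receives a robust witness.

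The main obstacle is the final verification that $\Gamma^*\nvdash\phi$, and here the infinitary rule is genuinely dangerous: the union of the chain may derive $\phi$ even though no $\Gamma_n$ does. Indeed, an application of (Inf) needs all of $\rho\vee(\sigma\to\tau^{n})$ for $n\in\mathbb{N}$, and these may become derivable only cofinally along the chain, so that the conclusion $\rho\vee(\sigma\to\sigma\&\tau)$ is produced for the first time in the limit and is derivable from no single $\Gamma_n$. Thus ordinary finitary compactness is unavailable and must be replaced by a controlled argument. The device I would use is the \emph{countable character} of the calculus: since $\mathcal{L}'$ is countable, every proof may be taken of countable length and hence uses only countably many premises, so $\Delta\vdash\phi$ implies $\Delta_0\vdash\phi$ for some countable $\Delta_0\subseteq\Delta$.

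Combining this with the construction, I would actually run the recursion to length $\omega_1$, revisiting each requirement cofinally often; then any derivation of $\phi$ from $\Gamma^*$ rests on countably many premises whose stages are bounded below $\omega_1$ by uncountable cofinality, reducing a derivation from $\Gamma^*$ to a derivation from some intermediate $\Gamma_\delta$. The genuinely delicate residue is the behaviour at countable limit stages, where (Inf) can still fire for the first time; to neutralize this I would use the disjunction lemmas (Lemmas \ref{weak_disyuncion} and \ref{disyuncion}) together with the prelinearity decisions to guarantee that, for every triple $(\rho,\sigma,\tau)$, the status of the (Inf)-conclusion $\rho\vee(\sigma\to\sigma\&\tau)$ is settled at the stage where the relevant pair is treated rather than emerging in the limit. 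Making this limit-control precise — so that the increasing union never introduces a new (Inf) derivation of $\phi$ — is the step I expect to be the crux of the whole argument; everything else is bookkeeping around Corollary \ref{prelinearity} and the lemma on constants.
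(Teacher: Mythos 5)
Your Henkin step fails in this logic: adding the witness axiom $\chi\s{x}{c}\to\forall x\chi$ for a fresh constant $c$ is \emph{not} conservative, and can destroy $\Gamma\nvdash\phi$ outright. The classical conservativity argument needs either the full deduction theorem or the theoremhood of the ``drinker'' sentence $\exists y\,(\chi\s{x}{y}\to\forall x\chi)$; here the deduction theorem is only the weak local one ($\Gamma,\psi\vdash\phi$ iff $\Gamma\vdash\psi^n\to\phi$ for some $n$), and the drinker sentence is not even valid over continuous t-norms, because infima need not be attained. Concretely, take a unary predicate $P$, $\Gamma=\{\neg\forall x P(x)\}$ and $\phi=\exists x\,\neg P(x)$. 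In the product t-norm with domain $\mathbb{N}$ and $P(n)=1/n$ we get $\|\neg\forall x P(x)\|=1$ but $\|\exists x\,\neg P(x)\|=0$, so by soundness $\Gamma\nvdash\phi$; yet from the witness axiom $P(c)\to\forall x P(x)$ one contraposes to $\neg\forall x P(x)\to\neg P(c)$, obtains $\neg P(c)$ by Modus Ponens, and then $\phi$ by ($\exists$1) --- even though $c$ is fresh. This is precisely why the paper never adds witness implications: it instead maintains a growing \emph{unprovable} disjunction $\phi_i$, and when $\Gamma_i\nvdash\phi_i\vee\forall x\chi$ it sets $\phi_{i+1}=\phi_i\vee\chi\s{x}{c}$ (sound by the lemma on constants together with (Lin)), while if $\Gamma_i\vdash\phi_i\vee\forall x\chi$ it adds $\phi_i\to\forall x\chi$, which forces $\Gamma_{i+1}\vdash\forall x\chi$. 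The Henkin property then comes out directly in its negative formulation, with no conservativity claim needed.

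Your treatment of (Inf) is also genuinely incomplete, and not in a way the $\omega_1$-device can repair. Countable character of proofs does give reflection at stages of uncountable cofinality, but every countable limit $\lambda$ has cofinality $\omega$, and there the premises $\rho\vee(\sigma\to\tau^n)$ may enter the chain cofinally in $\lambda$, so $\Gamma_\lambda\nvdash\phi$ does not follow from the induction hypothesis --- and your successor steps (e.g.\ the choice licensed by Corollary \ref{prelinearity}) require exactly that hypothesis to proceed. Nor do the prelinearity decisions settle (Inf): knowing which of $\alpha\to\beta$, $\beta\to\alpha$ is provable says nothing about whether all premises of a given (Inf) instance eventually become provable. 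The paper's mechanism, absent from your proposal, is to enumerate all potential (Inf)-conclusions $\gamma\vee(\alpha\to(\alpha\,\&\,\beta))$ as explicit requirements: if adding such a sentence preserves unprovability of the current target, it is added to the theory; otherwise the rule is permanently blocked by choosing $k$ with $\Gamma_i\nvdash\phi_i\vee\gamma\vee(\alpha\to\beta^k)$ --- such $k$ exists by the contrapositive of (Inf) combined with Lemma \ref{disyuncion} --- and enlarging the target to $\phi_{i+1}=\phi_i\vee\gamma\vee(\alpha\to\beta^k)$. Taking $\cl$ at every stage then makes $\Gamma^*=\bigcup_i\Gamma_i$ deductively closed (the paper's Claim 3), so no application of (Inf) can fire ``for the first time in the limit'', and the entire construction runs in length $\omega$. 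The moving-target sequence $(\phi_i)$ is the essential idea your argument is missing: with a fixed $\phi$ there is no mechanism to keep one premise of each dangerous (Inf) instance unprovable forever.
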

\begin{proof}
    Let $\mathcal{L}^\prime$ be the predicate language obtained by adding a countably infinite number of constants to $\mathcal{L}$.
    Let $\psi_0, \psi_1, \psi_2, \ldots $ be an enumeration of all the formulas in $\mathcal{L}^\prime$. 
    We will proceed inductively, building sets $\Gamma _i$  and sentences $\phi_i$ for all natural $i$. 
    The union of these sets will be the set $\Gamma^* $ we are searching for.

    Take $\Gamma _0 = \Gamma $ and $\phi_0 =\phi $. 
    Let us assume that $\Gamma _i$  and $\phi_i$  are defined. 
    We have several cases:
    \begin{enumerate}
        \item $\psi_i$ is a sentence of the form $\forall x \chi $: 
            \begin{enumerate}
                \item if $\Gamma _i \nvdash \forall x \chi $ :\\
                    We have two possible scenarios:
                    \begin{enumerate}
                        \item if $\Gamma _i \nvdash \phi_i \vee \forall x \chi $, we define $$\text{$\Gamma _{i+1}=\cl (\Gamma _i)$ and $\phi_{i+1}= \phi_i \vee \chi \s{x}{c}$,}$$
                            where $c$ is a constant of the language that does not appear in $\Gamma _i \cup \{\phi_i, \psi_i\}$. 
                        \item if $\Gamma _i \vdash \phi_i \vee \forall x\chi $, we define $$\text{$\Gamma _{i+1}= \cl (\Gamma _i \cup \{ \phi_i \to \forall x \chi  \}) $ and $\phi_{i+1}=\phi_i$.}$$
                    \end{enumerate}
                \item if $\Gamma _i \vdash \forall x \chi $, we define $$\text{$\Gamma _{i+1}=\cl (\Gamma _i)$  and $\phi_{i+1} = \phi_i$.}$$
            \end{enumerate}
        \item $\psi_i$ is a sentence of the form $\gamma \vee (\alpha \to (\alpha \&  \beta))$: 
            \begin{enumerate}
                \item if $\Gamma _i, \psi_i \nvdash \phi_i$, we define $$ \text{$\Gamma _{i+i}=\cl (\Gamma _i \cup \{\psi_i \})$  and $\phi_{i+1} = \phi_i$.}$$
                \item if $\Gamma _i,\psi_i \vdash \phi_i$, we define $$\text{$\Gamma _{i+1}=\cl (\Gamma _i) $  and $ \phi_{i+1} = \phi_i \vee \gamma \vee (\alpha \to \beta^k)$,}$$ where $k$ is such that $\Gamma _i \nvdash \phi_i \vee \gamma \vee (\alpha \to \beta ^k)$. The existence of this $k$ is given by claim 1 below.
            \end{enumerate}
        \item $\psi_i$ is not of the form of items 1 and 2:
            \begin{enumerate}
                \item if $\Gamma _i, \psi_i \vdash \phi_i$, we define $$\text{$\Gamma _{i+1}=\cl (\Gamma _i)$ and $\phi_{i+1}=\phi_i$.}$$
                \item if $\Gamma _i ,\psi_i \nvdash \phi_i $, we define $$\text{$\Gamma _{i+1}=\cl (\Gamma _i \cup \{ \psi_i \})$ and $\phi_{i+1}=\phi_i$.}$$ 
            \end{enumerate}
    \end{enumerate}

    \begin{claim}
        For every natural number $i $:
        \begin{enumerate}
            \item $\Gamma_i $ only contains a finite number of new constants,
            \item $\Gamma_i \nvdash \phi_i $, 
            \item Under the conditions of 2.(b), there exists a natural number $k $ such that
                \[
                    \Gamma_i \nvdash \phi_i \vee \gamma \vee (\alpha \to \beta^k).
                \]
        \end{enumerate}
    \end{claim}
    \begin{proof}
        We will proceed by induction on $i$. The case $i=0$ is immediate (we can assume that $\psi_0$ is chosen such that $\Gamma_0, \psi_0 \nvdash \phi_0$).

        Now let us suppose the result is true for a natural $i $. 
        \begin{enumerate}
            \item We have that $\Gamma_{i+1} $ can be one of the following
                \[
                    \cl(\Gamma_i), \cl(\Gamma_i \cup \{ \phi_i \to \psi_i \}), \cl(\Gamma_i \cup \{ \phi_i  \}).
                \]
                Since $\phi_i $ and $\psi_i $ only contain a finite number of constants, $\Gamma_i $ only contains a finite number of new constants by hypothesis, and $\cl $ does not add new constants, we can conclude that $\Gamma_{i+1} $ contains only a finite number of new constants. 

            \item We prove that $\Gamma _{i+1}\nvdash \phi_{i+1}$ in case $\psi_i$ is a sentence of the form $\forall x \chi $ with $\Gamma _i \nvdash \psi_i $ and $\Gamma _i \vdash \phi_i \vee \forall x \chi $; the rest of the cases being straightforward. 
                In this case the construction tells us that $\Gamma _{i+1}=\cl(\Gamma _i \cup \{\phi_i \to \forall x \chi\})$ and
                    $\phi_{i+1}=\phi_i$. Then, all we must prove is $\Gamma _i, \phi_i \to \forall x \chi  \nvdash \phi_i$.

                    Let us suppose that this is false, that is, $\Gamma _i, \phi_i \to \forall x \chi  \vdash \phi_i$. 
                    Observe that, by hypothesis, we have that $\Gamma _i\vdash \phi_i \vee \forall x \chi $, so, since $(\forall x \chi \to \phi_i)\to ((\phi_i \vee \forall x \chi  ) \to \phi_i))$ is a theorem, we infer $\Gamma _i, \forall x \chi  \to \phi_i \vdash \phi_i$.

                    Thus, we have that $\Gamma _i, \forall x \chi  \to \phi_i \vdash \phi_i$ and that $\Gamma _i, \phi_i \to \forall x \chi  \vdash \phi_i$. Since both $\forall x \chi$ and $\phi_i$ are sentences by construction, according to Corollary \ref{prelinearity} we can deduce that $\Gamma _i\vdash \phi_i$, which contradicts the inductive hypothesis.

            \item Assume the conditions of 2.(b) and that there is no $k $ such that $\Gamma _i \nvdash \phi_i \vee \gamma \vee (\alpha \to \beta^k) $. 
                By the Infinitary Rule, we see that $\Gamma _i \vdash (\phi_i \vee \gamma) \vee (\alpha \to (\alpha \& \beta))$, that is, $\Gamma _i \vdash \phi_i \vee \psi_i $. 

                Recall that by Reflexivity we have $\Gamma _i, \phi_i \vdash \phi_i$, and, by hyphotesis, we have $\Gamma _i, \psi_i \vdash \phi_i $, then using Corollary \ref{disyuncion} we obtain $\Gamma _i, \phi_i \vee \psi_i \vdash \phi_i$, and since $\Gamma _i \vdash \phi_i \vee \psi_i$ we can assert that $\Gamma _i \vdash \phi_i $, contradicting the inductive hypothesis.

            \end{enumerate}

    \end{proof}

    The claim just proved shows that the steps 1.(a).i and 2.(b) of the construction are sound.

    \begin{claim}
        For all natural numbers $i,j$ such that $i \leq j$ we have that
        \begin{enumerate}
            \item $\Gamma_i \subseteq \Gamma_j$,
            \item $\vdash \phi_i \to \phi_j$.
        \end{enumerate}
    \end{claim}
    \begin{proof}
        Immediate from the construction.
    \end{proof}

    \begin{claim}
        Let $ \Gamma^* = \bigcup_i \Gamma_i$. If $\Gamma^* \vdash \theta $ for a formula $\theta$, then $\theta \in \Gamma^* $.
    \end{claim}
    \begin{proof}
        Let us proceed by induction on the proof length of $\theta$. For the initial case we have that $\theta$ is an axiom or it belongs to $\Gamma^* $. We leave these cases to the
        careful reader.

        Let us consider a proof of $\theta $ from $\Gamma^* $ with length $\eta $. We have 3 cases:    
        \begin{itemize}
            \item $\theta$ follows by Modus Ponens from $\gamma$ and $\gamma \to \theta$. 
                We then have that $\Gamma^* $  proves $\gamma$ and $\gamma \to \theta $ with proof lengths smaller than $\eta$.
                Then, by the inductive hypothesis, we obtain $\gamma \in \Gamma^* $ and $\gamma \to \theta \in \Gamma^* $.
                We thus get that there exists $i$ such that $\gamma \in \Gamma _i$ and $\gamma \to \theta \in \Gamma _i $.
                Since $\cl(\Gamma_i) \subseteq \Gamma _{i+1}$ we obtain that $\theta \in \Gamma _{i+1}$. We can then
                conclude that $\theta \in \Gamma^*$.

            \item $\theta$  follows by Generalization. This case can be proved in a similar fashion to the previous case.

            \item $\theta$ follows by the Infinitary Rule.
                Then, $\theta$ must be of the form $\gamma \vee (\alpha \to (\alpha \&  \beta))$ for some sentences $\alpha, \beta, \gamma$.

                Let us suppose that $\theta \notin \Gamma^* $. Let $i$ be such that $\psi_i = \theta$. Then, $\psi_i \notin \Gamma _{i+1}$, thus, by construction, we have that
                \[
                    \Gamma _i, \psi_i \vdash \phi_i \text{ and } \phi_{i+1}= \phi_i \vee \gamma \vee (\alpha \to \beta^k) 
                \]
                for some $k $. 
                Since $\psi_i$ follows by the Infinitary Rule, we have that $\Gamma^* $ proves $\gamma \vee (\alpha \to \beta^k)$ with proof length smaller than $\eta$. Then, by the inductive hypothesis, $\gamma \vee (\alpha \to \beta ^k) \in \Gamma^* $. 
                Thus, there exists $j$ such that $\gamma \vee (\alpha \to \beta ^k) \in \Gamma _{j}$, so $\Gamma_{j } \vdash \phi_i \vee \gamma \vee (\alpha \to \beta ^k)$, that is, $\Gamma_j
                \vdash \phi_{i+1}$. We have two cases:
                \begin{description}
                    \item[$j \leq i+1$:] By Claim 2 we have that $\Gamma_j \subseteq \Gamma_{i+1} $, then by monotonicity of $\vdash $ we obtain $ \Gamma_{i+1} \vdash \phi_{i+1}$ contradicting Claim 1.
                    \item[$j > i+1$:] By Claim 2 we have that $\vdash \phi_{i+1} \to \phi_j$, then $\Gamma_j \vdash \phi_j$, contradicting Claim 1.
                \end{description}
        \end{itemize}
    \end{proof}

    \begin{claim}
        $\Gamma^* \nvdash \phi_i $ for all $i $.
    \end{claim}
    \begin{proof}
        Let us suppose $\Gamma^* \vdash \phi_i$. By the Claim 3, we have $\phi_i \in \Gamma^* $. 
        This shows that there exists $j$  such that $\phi_i \in \Gamma _j$.
        Then $\Gamma_j \vdash \phi_i$. We can proceed from this point and obtain a contradiction in a similar way as what is done at the end of the proof of the previous claim.
    \end{proof}

    As an immediate consequence of the previous claim, we have that $\Gamma^* \nvdash \phi$.

    \begin{claim}
        $\Gamma^* $ is a Henkin Theory. 
    \end{claim}
    \begin{proof}
        Let $\forall x \chi $  be a sentence such that $\Gamma^* \nvdash  \forall x \chi $. We want to prove that there exists a constant $c$  such that $\Gamma^* \nvdash \chi \s{x}{c}$. 

        Let $i$  be such that $\psi_i = \forall x \chi  $. 
        We then have three cases: 
        \begin{itemize}
            \item $\Gamma _i \vdash \psi_i$:\\
                This contradicts $\Gamma^*  \nvdash \forall x \chi  $.
            \item $\Gamma _i \nvdash \psi_i $ and $\Gamma _i \vdash \phi_i \vee \forall x \chi $ :\\
                In this case, the construction tells us that $\phi_i \to \forall x \chi \in \Gamma _{i+1} $. 

                Then, since $(\phi_i \vee \forall x \chi) \to ((\phi_i \to \forall x \chi) \to \forall x \chi)$  is a theorem, we deduce that $\Gamma _{i+1}\vdash \forall x \chi$.
                Thus, $\Gamma^* \vdash \forall x \chi $, a contradiction. 
            \item $\Gamma _i \nvdash \psi_i$  and $\Gamma _i\nvdash \phi_i \vee \forall x \chi $:\\
                In this case, the construction tells us that $\phi_{i+1}=\phi_i \vee \chi\s{x}{c}$ where $c$ is a constant such that does not appear in $\Gamma _i \cup \{\phi_i, \psi_i \}$). By Claim 4 we get that $\Gamma^* \nvdash \phi_i \vee \chi\s{x}{c}$. Then, $\Gamma^* \nvdash \chi\s{x}{c}$, since $\chi\s{x}{c} \to (\phi_i \vee \chi\s{x}{c})$  is a theorem. 
        \end{itemize}
    \end{proof}

    \begin{claim}
        $\Gamma^* $ is prelinear.
    \end{claim}
    \begin{proof}
        Let $\alpha ,\beta $  be sentences. 
        Let $i,j$  be such that $\alpha \to \beta =\psi_i$  and $\beta  \to \alpha  = \psi_j$.
        Suppose that $\Gamma^* \nvdash \psi_i$ and that $\Gamma^* \nvdash \psi_j$ . 
        This tells us that $\psi_i \notin \Gamma^* $ and that $\psi_j \notin \Gamma^* $.
        Then, by construction, we obtain $\Gamma _i, \psi_i \vdash \phi_i$ and $\Gamma _j, \psi_j \vdash \phi_j$.
        Let $l = \max(i,j)$. By Claim 2 we deduce that $\Gamma _l, \psi_i \vdash \phi_l$ and $\Gamma _l, \psi_j \vdash \phi_l$. 
        Then, since $\psi_i= \alpha \to \beta $ and $\psi_j = \beta \to \alpha $, and using Corollary \ref{prelinearity}, we conclude that $\Gamma _l \vdash \phi_l$, contradicting Claim 1.
    \end{proof}

    This concludes the proof of the theorem.
\end{proof}

%una pequeña seccion donde probamos soundness
\subsection{Soundness}
In this section, we quickly prove that the infinitary inference rule is sound in countinuous t-norms. This, combined with the fact that both the axiom RC and the logic $\vdash_{BL}$  are sound in continuous t-norms \cite{Hajek98}, will lead to a soundness result for our logic.
\begin{lemma} \label{infinitaryRuleSoundness}
    Let $\phi$, $\alpha$ and $\beta$ be formulas.
    Then $$\{\phi \vee (\alpha \to \beta^n) : n\in \mathbb{N} \}\vDash \phi \vee (\alpha \to \alpha \& \beta).$$
\end{lemma}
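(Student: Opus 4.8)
The plan is to argue semantically in a fixed but arbitrary continuous t-norm $([0,1],\cdot)$ together with a fixed safe $[0,1]$-structure $\mathbf{M}$ that is assumed to be a model of every premise $\phi \vee (\alpha \to \beta^n)$. Recall that in a totally ordered BL-algebra the connective $\vee$ is interpreted as the maximum, and that $\|\alpha \to \beta^n\|^{\mathbf{M},w} = \|\alpha\|^{\mathbf{M},w} \to (\|\beta\|^{\mathbf{M},w})^n$, where $\to$ is the residuum. I would fix an arbitrary valuation $v$ and abbreviate $p = \|\phi\|^{\mathbf{M},v}$, $a = \|\alpha\|^{\mathbf{M},v}$ and $b = \|\beta\|^{\mathbf{M},v}$; since $\mathbf{M}$ being a model of $\phi \vee (\alpha \to \alpha \& \beta)$ amounts to $\max\{p,\, a \to (a\cdot b)\} = 1$ for every $v$, it suffices to establish this single inequality. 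Instantiating the hypothesis at the valuation $w = v$ gives $\max\{p,\, a \to b^n\} = 1$ for every $n \in \mathbb{N}$.

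First I would dispose of the trivial alternative. Because $[0,1]$ is linearly ordered, $\max\{p,\, a \to b^n\} = 1$ forces $p = 1$ or $a \to b^n = 1$. If $p = 1$ the goal is immediate, so assume $p < 1$; then $a \to b^n = 1$, equivalently $a \le b^n$, for all $n$. The sequence $(b^n)_n$ is non-increasing and bounded below, hence converges to $e := \inf_n b^n = \lim_n b^n$. Passing to the limit in $b^{2n} = b^n \cdot b^n$ and invoking the (joint) continuity of $\cdot$ yields $e \cdot e = e$, so $e$ is idempotent; moreover $e \le b$ and, since $a \le b^n$ for every $n$, also $a \le e$.

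The crux is the elementary fact that in a continuous t-norm an idempotent absorbs every smaller element: if $e$ is idempotent and $a \le e$, then $a \cdot e = a$. I would justify this through the ordinal-sum decomposition of continuous t-norms (Lemma \ref{decomposition}): the idempotents are exactly the cut points of the sum, and each Archimedean summand (a Łukasiewicz or product component) has its top element as local unit, so that $a \cdot e = \min\{a,e\} = a$ whenever $a \le e$ and $e$ is idempotent. Granting this identity, monotonicity together with $b \ge e$ gives $a \cdot b \ge a \cdot e = a$, whence $a \to (a\cdot b) = 1$ and therefore $\max\{p,\, a \to (a\cdot b)\} = 1$. As $v$ was arbitrary, $\mathbf{M}$ is a model of $\phi \vee (\alpha \to \alpha \& \beta)$, which is what we want.

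I expect the only genuine obstacle to be the identity $a \cdot e = a$ for an idempotent $e \ge a$; everything else is routine manipulation of residua and maxima in a chain. This is precisely where the hypothesis \emph{continuous} is used, since in a general BL-chain a retraction argument on $x \mapsto x\cdot e$ need not collapse to the identity. Using the structural description of continuous t-norms, however, the fact reduces to the well-known statement that every idempotent $e$ of a continuous t-norm satisfies $e \cdot x = \min\{e,x\}$, and the proof closes.
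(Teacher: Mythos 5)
Your proof is correct, but it takes a genuinely different route from the paper's. The paper never extracts an idempotent: writing $e=\bigwedge_n b^n$, it uses divisibility ($x\wedge y = x\cdot(x\to y)$) to rewrite $a = a\wedge e = e\cdot(e\to a)$, uses continuity once to get $e\cdot b=\bigwedge_n b^{n+1}=e$, and then computes directly $a\cdot b = \bigl(e\cdot(e\to a)\bigr)\cdot b = (e\cdot b)\cdot(e\to a)= e\cdot(e\to a) = a$, whence $a\to(a\cdot b)=1$. That argument is a short self-contained equational computation inside the BL-algebra $[0,1]$. You instead pass to the limit $e=\lim_n b^n$, observe by joint continuity that $e$ is idempotent, and invoke the absorption law ($e$ idempotent and $a\le e$ imply $a\cdot e=a$, i.e.\ $e\cdot x=\min\{e,x\}$), from which $a\cdot b\ge a\cdot e=a$ follows; this is heavier machinery but isolates the structural content nicely. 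Two remarks on your justification of the absorption law. First, your appeal to Lemma \ref{decomposition} is slightly misdirected: that lemma decomposes an abstract BL-chain into Wajsberg hoops and by itself does not deliver the Archimedean/Łukasiewicz-or-product description of the summands; what you actually want is the classical Mostert--Shields decomposition of a \emph{continuous} t-norm into Łukasiewicz, product, and Gödel summands, which does give $e\cdot x=\min\{e,x\}$ at every idempotent $e$. Second, the absorption law also admits a one-line direct proof from continuity alone, avoiding any structure theorem: for $x\le e$, the intermediate value theorem applied to the continuous map $t\mapsto t\cdot e$ (which ranges from $0$ to $e$) yields $t$ with $t\cdot e = x$, and then $x\cdot e = t\cdot e\cdot e = t\cdot e = x$. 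With either repair your argument is complete, and in fact, once stated this way, your route and the paper's both reduce to a single use of continuity plus order manipulation; the paper's buys brevity and self-containment, yours buys a reusable structural fact about idempotents.
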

\begin{proof}
    Consider a continuous t-norm $\mathbf{B}=([0,1], \cdot)$ and a $\mathbf{B}$-structure $\mathbf{M}$ such that for each $n \in \mathbb{N}$ we have that $\mathbf{M}$ is a model of the formula $\phi \vee (\alpha \to \beta^n)$. 
    Let $v$ be a valuation.  
    Our goal is to prove that $\| \phi \vee (\alpha \to \alpha \& \beta) \|^{\mathbf{M},v}=1$.
    Given that  $\mathbf{M}$ is a model of $\phi \vee (\alpha \to \beta^n)$ for all $n\in \mathbb{N}$, we have that $\| \phi \|^{\mathbf{M},v} \vee ( \| \alpha \|^{\mathbf{M},v} \to (\| \beta \|^{\mathbf{M},v})^n) = \| \phi \vee (\alpha \to \beta^n) \|^{\mathbf{M},v } = 1$ for all $n\in \mathbb{N}$.
    For convenience let us call  $\| \alpha \|^{\mathbf{M},v} = a$, $\| \beta \|^{\mathbf{M},v} = b$ and $\| \phi \|^{\mathbf{M},v} = c$, thus, replacing in the last expresion, we obtain $c \vee ( a \to  b ^n) = 1$.
    
    If $c = 1$, it is evident that $1 = c \vee (a \to a \cdot b) = \| \phi \vee (\alpha \to \alpha \& \beta) \|^{\mathbf{M},v}$. 
    
    If $c < 1$, since $\mathbf{B}$ is totally ordered, we have $a \to b^n= 1$ for all $n$, implying $a \leq b^n$ for all $n$, and then
    $$ 
    a \wedge \bigwedge_{n} b^n = a.
    $$
    Now, replacing, we obtain
    $$
    a \cdot b= (a \wedge \bigwedge_{n} b^n) \cdot b = (\bigwedge_{n} b^n \cdot (\bigwedge_{n} b^n \to a)) \cdot b.
    $$ 
    As $\cdot $ is continuous, we have $(\bigwedge_{n} b^n) \cdot  b = \bigwedge_{n} b ^{n+1}= \bigwedge_{n} b^n$. 
    Therefore 
    $$
    a \cdot  b =(\bigwedge_{n} b^n \cdot (\bigwedge_{n} b^n \to a)) \cdot b = \bigwedge_{n} b^n \cdot  (\bigwedge_{n} b^n \to a) = a \wedge \bigwedge_{n} b^n = a.
    $$ 
    Thus $1 = a \to (a \cdot b)= c \vee (a \to (a \cdot b)) = \| \phi \vee (\alpha \to  (\alpha \& \beta)) \|^{\mathbf{M},v}$. 
\end{proof}
\begin{corollary}
The logic $\vdash$ is sound with respect to the class of continuous t-norms.
\end{corollary}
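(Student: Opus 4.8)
The plan is to prove soundness in the standard form: for every theory $\Gamma$ and every formula $\phi$, if $\Gamma \vdash \phi$ then $\Gamma \vDash \phi$. Fix an arbitrary continuous t-norm $\mathbf{B}=([0,1],\cdot)$ and a safe $\mathbf{B}$-structure $\mathbf{M}$ that is a model of $\Gamma$; it then suffices to show $\|\phi\|^{\mathbf{M},v}=1$ for every valuation $v$. I would fix a proof $\{\phi_i\}_{i\le\xi}$ of $\phi$ from $\Gamma$ and argue by transfinite induction on the index $i$ that $\|\phi_i\|^{\mathbf{M},v}=1$ for all $v$; instantiating $i=\xi$ then yields the result. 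Because the definition of a proof justifies $\phi_i$ only in terms of formulas in $\mathcal{H}_i=\{\phi_j:j<i\}$, that is, in terms of formulas of strictly smaller index, strong transfinite induction is exactly the right tool, and no separate argument for limit ordinals is required: the induction hypothesis already asserts validity of all $\phi_j$ with $j<i$ simultaneously.

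For the inductive step I would dispatch each of the six possible justifications of $\phi_i$. If $\phi_i$ is an axiom of $\mathrm{BL}\forall$, it is valid since every continuous t-norm is a BL-chain and $\mathrm{BL}\forall$ is sound over BL-chains \cite{Hajek98}; if $\phi_i$ is an instance of (RC), it is valid by the result of Hájek and Montagna that (RC) holds in every continuous t-norm \cite{HajekMontagna08}. If $\phi_i\in\Gamma$, validity is immediate from the hypothesis that $\mathbf{M}$ models $\Gamma$. The two finitary rules are handled pointwise at each $v$: for Modus Ponens from $\chi$ and $\chi\to\phi_i$, the induction hypothesis gives $\|\chi\|^{\mathbf{M},v}=1$ and $\|\chi\|^{\mathbf{M},v}\to^{\mathbf{B}}\|\phi_i\|^{\mathbf{M},v}=1$, so that $1\to^{\mathbf{B}}\|\phi_i\|^{\mathbf{M},v}=\|\phi_i\|^{\mathbf{M},v}=1$; for Generalization, where $\phi_i=\forall x\chi$ with $\chi$ of smaller index, the induction hypothesis gives $\|\chi\|^{\mathbf{M},w}=1$ for every $w$, whence $\|\forall x\chi\|^{\mathbf{M},v}=\bigwedge_{w\equiv_x v}\|\chi\|^{\mathbf{M},w}=1$. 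Finally, if $\phi_i$ follows by the Infinitary Rule, then its premises $\phi\vee(\alpha\to\beta^n)$ for all $n\in\mathbb{N}$ lie in $\mathcal{H}_i$ and so are valid in $\mathbf{M}$ by the induction hypothesis; Lemma \ref{infinitaryRuleSoundness} then gives validity of the conclusion $\phi\vee(\alpha\to\alpha\,\&\,\beta)$.

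The genuinely new ingredient is Lemma \ref{infinitaryRuleSoundness}, which precisely covers the one nonstandard rule; everything else is the assembly of already-established facts (soundness of $\mathrm{BL}\forall$, soundness of (RC), and the residuation identities of a BL-chain). Accordingly, the only structural point that deserves attention is the passage to transfinite proof length forced by the infinitary rule, and the remark above shows this causes no difficulty, since each proof step depends solely on strictly earlier formulas. I therefore expect no substantive obstacle: the corollary is an immediate consequence of Lemma \ref{infinitaryRuleSoundness} together with the cited soundness results, organized by transfinite induction on proof length.
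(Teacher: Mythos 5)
Your proposal is correct and matches the paper's (implicit) argument exactly: the paper leaves the corollary unproved, stating only that Lemma \ref{infinitaryRuleSoundness} combined with the known soundness of BL$\forall$ over BL-chains and of (RC) over continuous t-norms yields the result, which is precisely the assembly you carry out. Your explicit transfinite induction on proof length, with the observation that strong induction makes limit stages automatic, is the right way to make that assembly rigorous and fills in the only detail the paper omits.
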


\section{Completeness}
\label{sectionCompleteness}
Let $\mathbf{Sen}=(Sen, \cdot, \to, \wedge, \vee, 0, 1)$ be defined in the natural way, where $\phi \cdot \psi = \phi \& \psi$.
Given a theory $\Gamma \subseteq Fm$, let $\equiv_{\Gamma}$ be the equivalence relation on $Sen$ given by 
\begin{equation*}
    \phi \equiv_{\Gamma} \psi \iff \Gamma \vdash \phi \to \psi \text{ and } \Gamma \vdash \psi \to \phi.
\end{equation*}
We write $[\phi]_{}$ for the equivalence class of $\phi$ modulo $\equiv_{\Gamma}$. 
It can be shown that this relation is a congruence of $\mathbf{Sen}$.
Therefore, we can define an algebra $\mathbf{L}_\Gamma = \sfrac{\mathbf{Sen}}{\equiv_{\Gamma}}$.
We call this algebra \textit{Lindenbaum algebra}. 

We reference Lemma 5.2.6 from \cite{Hajek98} for future use.
\begin{lemma}[Hájek] \label{Henkin}
\begin{enumerate} 
    \item If $\Gamma$ is a Henkin theory then 
    \begin{equation*}
    [\forall x \phi]= \inf_c [\phi\s{x}{c}] \text{ and } [\exists x \phi]= \sup_c [\phi\s{x}{c}],
    \end{equation*}
    where $c$ runs over all constants in $\Gamma$. 
    \item If $\Gamma $ is a prelinear theory then $\textbf{L}_{\Gamma}$ is a chain, that is, a totally ordered poset.  
\end{enumerate}
\end{lemma}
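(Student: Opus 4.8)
The plan is to treat the two items separately, since the second is almost a restatement and the first carries all the content. Throughout I would use the standard description of the Lindenbaum algebra: $\mathbf{L}_\Gamma$ is a BL-algebra because $\equiv_\Gamma$ is a congruence collapsing every $\Gamma$-theorem to the top element $[1]$, and its order is characterized by $[\alpha]\leq[\beta]$ iff $\Gamma\vdash\alpha\to\beta$ (equivalently $[\alpha\to\beta]=[1]$). These facts I would take as part of the general construction and not reprove here.

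For item (2), I would simply unwind the definitions. Prelinearity of $\Gamma$ states that for all sentences $\alpha,\beta$ either $\Gamma\vdash\alpha\to\beta$ or $\Gamma\vdash\beta\to\alpha$; by the order characterization this reads $[\alpha]\leq[\beta]$ or $[\beta]\leq[\alpha]$. Since every element of $\mathbf{L}_\Gamma$ is $[\alpha]$ for some sentence $\alpha$, this says precisely that the partial order of $\mathbf{L}_\Gamma$ is total, i.e. $\mathbf{L}_\Gamma$ is a chain. So nothing beyond the order characterization is needed.

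For item (1), I would prove the universal case in full and note that the existential case is dual. First, $[\forall x\phi]$ is a lower bound of $\{[\phi\s{x}{c}]:c\}$: axiom $(\forall 1)$ gives $\vdash\forall x\phi\to\phi\s{x}{c}$, hence $[\forall x\phi]\leq[\phi\s{x}{c}]$ for every constant $c$. Next, to see it is the \emph{greatest} lower bound, let $[\psi]$ be any lower bound with $\psi$ a sentence, so $\Gamma\vdash\psi\to\phi\s{x}{c}$ for all $c$. Because $x$ is not free in $\psi$, substitution commutes with the implication, giving $\psi\to\phi\s{x}{c}=(\psi\to\phi)\s{x}{c}$, so $\Gamma\vdash(\psi\to\phi)\s{x}{c}$ for every $c$. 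Here the Henkin property enters in contrapositive form: if $\Gamma$ proves every instance $(\psi\to\phi)\s{x}{c}$, then $\Gamma\vdash\forall x(\psi\to\phi)$. Finally axiom $(\forall 2)$, whose side condition $x$ not free in $\psi$ is met, yields $\vdash\forall x(\psi\to\phi)\to(\psi\to\forall x\phi)$, and modus ponens gives $\Gamma\vdash\psi\to\forall x\phi$, i.e. $[\psi]\leq[\forall x\phi]$. Thus $[\forall x\phi]=\inf_c[\phi\s{x}{c}]$. For the existential case I would run the mirror argument: $(\exists 1)$ shows $[\exists x\phi]$ is an upper bound, and for an arbitrary upper bound $[\psi]$ I again reach $\Gamma\vdash(\phi\to\psi)\s{x}{c}$ for all $c$, apply the Henkin property to get $\Gamma\vdash\forall x(\phi\to\psi)$, and conclude with $(\exists 2)$ that $\Gamma\vdash\exists x\phi\to\psi$, so $[\exists x\phi]\leq[\psi]$.

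The main obstacle is the extremality (greatest-lower-bound / least-upper-bound) half of item (1): the bound directions follow immediately from the substitution axioms, but extremality is exactly the point where the syntactic infimum could fail to be genuine, and it is the Henkin property that supplies witnessing constants rich enough to force $\forall x(\psi\to\phi)$ to be provable. Beyond invoking Henkin at the right place, the only delicate points are the substitution bookkeeping — using that $\psi$ is a sentence so that substitution passes through the implication — and verifying the side conditions "$x$ not free in $\psi$" demanded by $(\forall 2)$ and $(\exists 2)$; everything else is routine.
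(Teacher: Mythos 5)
Your proof is correct and is essentially the standard argument for this result: the paper itself gives no proof, citing it as Lemma 5.2.6 of H\'ajek's book \cite{Hajek98}, and your argument — $(\forall 1)$/$(\exists 1)$ for the bound directions, the Henkin property in contrapositive form combined with $(\forall 2)$/$(\exists 2)$ for extremality, and the order characterization $[\alpha]\leq[\beta]$ iff $\Gamma\vdash\alpha\to\beta$ for item (2) — is exactly the textbook proof. You also correctly identify the two side conditions (that $\psi$ is a sentence, so substitution passes through the implication and the $(\forall 2)$/$(\exists 2)$ freeness conditions hold), so there is nothing to fix.
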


Let then $\Gamma$ be a prelinear theory and $\mathrm{\mathbf{L}}$ the associated Lindenbaum algebra. 
We now provide some lemmas that allow us to embed this algebra in a t-norm. 

Remember that an algebra is called \textit{simple} when its only filters are trivial.
\begin{lemma} \label{infi_implies_simple}
    Any BL-chain that satisfies the generalized quasi-identity
\[
\tag{K}
    (\&_{n\in \mathbb{N}} (\alpha \to \beta^n) = 1 ) \Rightarrow (\alpha \to \alpha \& \beta = 1) 
\] 
is an ordinal sum of simple hoops. 
\end{lemma}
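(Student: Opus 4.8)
The plan is to lean on the decomposition in Lemma \ref{decomposition}: write the BL-chain $\mathbf{A}$ as an ordinal sum $\bigoplus_{i\in I}\mathbf{H}_i$ of totally ordered Wajsberg hoops and show that each summand is simple. I would argue by contraposition, assuming that some component $\mathbf{H}_j$ has a nontrivial filter $F$ with $\{1\}\subsetneq F\subsetneq H_j$, and then exhibit elements $\alpha,\beta\in H_j$ refuting (K) in $\mathbf{A}$. The first observation is that in a chain the premise $\&_{n}(\alpha\to\beta^n)=1$ is just ``$\alpha\le\beta^n$ for every $n$'' (each factor is $\le 1$, so the infinite conjunction is $1$ iff every conjunct is), while the conclusion $\alpha\to\alpha\,\&\,\beta=1$ fails exactly when $\alpha\cdot\beta<\alpha$. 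Hence it suffices to find $\alpha,\beta\in H_j$ with $\alpha\le\beta^n$ for all $n$ yet $\alpha\cdot\beta<\alpha$. Since both elements lie in a single summand, the powers $\beta^n$, the implications $\alpha\to\beta^n$, the product $\alpha\cdot\beta$, and the final implication are all computed inside $\mathbf{H}_j$ and agree with the operations of $\mathbf{A}$, so witnesses found in the component genuinely violate (K) in $\mathbf{A}$.

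To build the witnesses I would pick any $\beta\in F\setminus\{1\}$. As $F$ is closed under $\cdot$, every power $\beta^n$ lies in $F$; and as $F$ is upward closed in the totally ordered $H_j$, any $\gamma\notin F$ must satisfy $\gamma<\beta^n$ for all $n$. Thus setting $\alpha:=\gamma$ for any $\gamma\notin F$ already secures the premise, and the only real work is to arrange $\gamma\cdot\beta<\gamma$. Here I would invoke the structure theory of totally ordered Wajsberg hoops, according to which $\mathbf{H}_j$ is either cancellative or bounded. In the cancellative case, cancellation gives $\gamma\cdot\beta=\gamma=\gamma\cdot 1\Rightarrow\beta=1$, so $\gamma\cdot\beta<\gamma$ for \emph{every} $\gamma$ (recall $\beta<1$) and any $\gamma\notin F$ works. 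In the bounded case $\mathbf{H}_j$ is the reduct of an MV-chain, where a standard property (quickest seen through Mundici's representation $\Gamma(G,u)$, in which $\gamma\cdot\beta=(\gamma+\beta-u)\vee 0$) shows that $\gamma\cdot\beta=\gamma$ forces $\gamma=0_{H_j}$ or $\beta=1$; hence $\gamma\cdot\beta<\gamma$ for every $\gamma>0_{H_j}$.

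The one delicate point, and what I expect to be the main obstacle, is guaranteeing in the bounded case that some $\gamma\notin F$ satisfies $\gamma>0_{H_j}$, so that we avoid the degenerate fixed point at the bottom. I would settle this by observing that if $0_{H_j}$ were the only element outside $F$, then $F=H_j\setminus\{0_{H_j}\}$ would have to be closed under $\cdot$; but whenever $0<a<1$ in a bounded Wajsberg hoop one has $a\cdot(a\to 0_{H_j})=0_{H_j}$ with $a\to 0_{H_j}>0_{H_j}$, so such a chain with more than two elements always has zero divisors and $H_j\setminus\{0_{H_j}\}$ is not a filter, while a two-element chain has no nontrivial filter at all — either way contradicting the existence of the nontrivial $F$. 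Consequently a positive $\gamma\notin F$ exists, and $\alpha=\gamma$ together with $\beta$ give $\alpha\le\beta^n$ for all $n$ but $\alpha\cdot\beta<\alpha$, contradicting (K). Therefore every component $\mathbf{H}_i$ is simple and $\mathbf{A}=\bigoplus_i\mathbf{H}_i$ is an ordinal sum of simple hoops. The cancellative case and the overall reduction to components are routine; the bookkeeping of the bounded/cancellative dichotomy and the zero-divisor argument securing a positive witness are where the care is needed.
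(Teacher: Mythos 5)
Your proof is correct and takes essentially the same route as the paper: decompose via Lemma \ref{decomposition}, observe that (K) passes to the Wajsberg-hoop summands, and split into the cancellative (unbounded) and bounded (MV) cases, where your zero-divisor argument ruling out $F = H\setminus\{0\}$ is the very observation the paper uses when it deduces $u \to 0 \notin F$, hence $u \to 0 = 0$ and $u = 1$, for $u \in F$. The only cosmetic differences are that you argue contrapositively and invoke Mundici's representation where the paper gets ``$a = 0$ or $b = 1$'' from the elementary computation $0 = a\cdot(a\to 0) = a \wedge (b \to 0)$.
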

\begin{proof}
Lemma \ref{decomposition} in the introduction states that every BL-chain can be represented as an ordinal sum of totally ordered Wajsberg hoops.
If a BL-chain satisfies (K), then the hoops in the decomposition also satisfy (K).
We will now show that every totally ordered Wajsberg hoop satisfying (K) is simple.

Let $\mathbf{H}$ be a Wajsberg hoop. 
Consider a proper filter $F$ (non trivial) of $\mathbf{H}$, with $b \in F$ and $a \in H-F$.
Since $a \leq b^n$ for all $n \in \mathbb{N}$, by applying (K), we have $a \leq a \cdot b$.
This implies $a = a \cdot b$.

Now, we consider whether $\mathbf{H}$ is bounded or not.
If $\mathbf{H}$ is unbounded, then $\mathbf{H}$ is a cancellative hoop \cite{BF00}, and from $a = a\cdot b$, we get $1 = b$.
Therefore, if $\mathbf{H}$ is unbounded, there are no proper filters, making $\mathbf{H}$ simple.

If $\mathbf{H}$ is bounded, it has a minimum element $0$.
Again, considering $a = a \cdot b$, we have:
\begin{align*}
0 &= a \cdot (a \to 0)\\
 &= a \cdot ((a \cdot b) \to 0) \\
&= a \cdot ( a \to (b \to 0)) \\
&= a \wedge (b \to 0).
\end{align*}
This leads to the conclusion that $a=0$ or $b=1$ (since  $b=(b\to 0)\to 0 = 0\to 0 =1$).
Thus, the only possible element outside $F$ is $0$, that is $F=H-\{0\}$.
Now, consider $u\in F$, then $u\to 0\notin F$, meaning $u\to 0 =  0$, leading to $u = 1$. 
This implies $F$ is a trivial filter, making $\mathbf{H}$ simple.
\end{proof}  

We call an element $a $ in an BL-algebra \textit{idempotent} iff $a \cdot a = a$.

% Esta definicion aclaro que esta sacada del paper de ellos?
A pair $(X,Y)$ is called a \textit{cut} in a BL-chain $A$ if the following conditions hold:
\begin{itemize}
\item $X \cup Y = A$,
\item $x\leq y$, for all $x\in X$ and $y \in Y$,
\item $Y$ is closed under $\cdot$, and 
\item $x*y=x$, for all $x \in X$ and $y \in Y$.
\end{itemize}
A cut in a BL-chain $A$ is called \textit{saturated} if there is an idempotent $u\in A$ such that $x \leq u \leq y$ for all $x\in X$ and $y \in Y$.
A BL-chain $A$ is called \textit{saturated} if each cut $(X,Y)$ in $A$ is saturated.

A BL-chain $A$ is called \textit{weakly saturated} if the following holds:
\begin{itemize}
\item If $c\in A$ is not an idempotent, then there are idempotents $a,b$ such that $a < c < b$ and the set 
\[
    [a,b] = \{ x\in A : a \leq x \leq b \}
\]
is the domain of either an MV-algebra or of a product algebra, denoted by $\mathbf{A}_{[a,b]}$, with respect to the operations
\begin{equation} \label{eq:defining_algebra_cluster}
    x *_{[a,b]} y = x * y \text{ and } x \to_{[a,b]} y = \min(x\to y, b).
\end{equation}
Moreover, $\mathbf{A}_{[a,b]}$ is Archimedean, that is, if $a < r < s < b$, then there is $n$ such that $s^n < r$.
\item If $c \in A$ is an idempotent, then there are $a \leq c$ and $b \geq c$ such that $[a,b]$ entirely consists of idempotents and 
\begin{itemize}
\item for all $h <a$ there is $h'$ such that $h\leq h' < a$ and $h'$ is not an idempotent;
\item for all $k>b$ there is $k'$ such that $b<k' \leq k$ and $k'$ is not an idempotent; thus if $a<b$, then $\mathbf{A}_{[a,b]}$ defined as in \eqref{eq:defining_algebra_cluster} is a Gödel algebra.
\end{itemize}
\end{itemize}

\begin{lemma} \label{oldSaturate}
Each BL-chain $\mathbf{A}$ can be isomorphically embedded into a saturated BL-chain $\overline{\mathbf{A}}$. Moreover:
\begin{enumerate}
\item the new elements are all idempotents,
\item if $C \subseteq A$ and $\bigwedge^{\mathbf{A}}C$ exists and is idempotent, then $\bigwedge^{\overline{\mathbf{A}}}C$ exists and $\bigwedge^{\mathbf{A}}C= \bigwedge^{\overline{\mathbf{A}}}C$. The same holds true for the supremum.
\end{enumerate}
\end{lemma}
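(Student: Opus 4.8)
The plan is to embed $\mathbf{A}$ into a saturated BL-chain by "filling in" the problematic cuts with fresh idempotent elements, one for each cut that lacks a separating idempotent. First I would analyze the structure of cuts in $\mathbf{A}$. By Lemma~\ref{decomposition}, $\mathbf{A}$ is an ordinal sum of totally ordered Wajsberg hoops $\bigoplus_{i\in I}\mathbf{H}_i$. A cut $(X,Y)$ that is \emph{not} saturated is one for which there is no idempotent $u$ with $X\leq u\leq Y$; intuitively this is a "gap" sitting strictly inside a single Wajsberg hoop component $\mathbf{H}_i$ (the ordinal-sum boundaries between components are already witnessed by the idempotent $1$ of the lower component, so the only unsaturated cuts live inside a component). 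The condition $x*y=x$ for $x\in X$, $y\in Y$ forces this gap to behave like a partition into a downward-closed absorbing part and an upward-closed multiplicatively closed part; in a Wajsberg hoop such a cut corresponds to a filter-like splitting.

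The construction I would carry out is as follows. For each unsaturated cut $(X,Y)$ of $\mathbf{A}$, introduce a new symbol $u_{(X,Y)}$ and place it in the order strictly between $X$ and $Y$ (using the cut data to locate it). Define $\overline{A}$ to be $A$ together with all these new symbols, ordered in the obvious way, and extend the operations by decreeing each new $u_{(X,Y)}$ to be idempotent: for the new element $u$ and any $w\in\overline{A}$, set $u\cdot w = \min(u,w)$ and define the residuum accordingly ($u\to w = w$ if $w<u$, and $=1$ if $w\geq u$, with symmetric clauses when $w$ is the smaller of two new elements). This makes each $u$ behave exactly like a boundary idempotent in an ordinal sum, so one checks that $\overline{\mathbf{A}}$ is again a BL-chain. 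The key verifications are associativity and the residuation law $x\cdot y\leq z \iff x\leq y\to z$ across the new elements, which follow because inserting an idempotent into the right spot of a totally ordered hoop is precisely a refinement of its ordinal-sum decomposition.

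Next I would verify the three numbered claims. That $\mathbf{A}\hookrightarrow\overline{\mathbf{A}}$ is an embedding is immediate since the operations on the old elements are untouched. Claim~(1), that the new elements are idempotents, holds by fiat of the definition $u\cdot u=\min(u,u)=u$. For saturation: given any cut $(X',Y')$ of $\overline{\mathbf{A}}$, I restrict it to $A$ to obtain a cut of $\mathbf{A}$; if that restricted cut was already saturated in $\mathbf{A}$ the witnessing idempotent still works, and otherwise the new element $u_{(X,Y)}$ we inserted for it is by construction an idempotent separating $X'$ from $Y'$. One must also confirm no \emph{new} unsaturated cuts were created by the insertion, which holds because every new element is itself idempotent and hence serves as its own separator for cuts it straddles.

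Claim~(2) is the part I expect to be the main obstacle, since it concerns preservation of infima under the embedding. Suppose $C\subseteq A$ with $m=\bigwedge^{\mathbf{A}}C$ existing and idempotent. I must show no new element sneaks strictly between $m$ and all of $C$, i.e. that $m$ remains the infimum in $\overline{\mathbf{A}}$. A new element $u_{(X,Y)}$ could only lower the infimum if $m<u_{(X,Y)}\leq c$ for every $c\in C$; but then $(X,Y)$ would be a cut with $X\ni m$ below the gap and $Y\supseteq C$ above it, and since $m$ is an \emph{idempotent} lower bound of $C$ lying at or below the gap, $m$ itself separates $X$ from $Y$ — so this cut was already saturated and we did not insert any new element there. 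This is the delicate point: it uses crucially that the cut's separating role can be played by the idempotent meet $m$, so that precisely the cuts requiring new elements are those with no such idempotent, and those cannot interfere with idempotent infima. The supremum case is entirely dual. I would close by remarking that a single such enlargement may not saturate \emph{all} cuts at once if new cuts among old elements become apparent only after insertion; if so, one iterates the construction transfinitely and takes a union (a direct limit of chains), with the preservation in Claim~(2) passing to the limit because it is witnessed at a single stage.
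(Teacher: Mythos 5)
Your proposal is correct and follows essentially the same route as the paper. The paper delegates the construction itself --- adjoining one new element for each unsaturated cut, all new elements idempotent, with the order and operations extended naturally --- to Theorem~4 of \cite{CT05}, and only writes out the proof of item~(2); your argument for item~(2) is the same as the paper's: a new element $u$ with $\bigwedge^{\mathbf{A}}C < u \leq c$ for all $c \in C$ would have to be an unsaturated cut $(X,Y)$ of $\mathbf{A}$, yet every $x \in X$ is a lower bound of $C \subseteq Y$ (hence $x \leq \bigwedge^{\mathbf{A}}C$) and every $y \in Y$ dominates $\bigwedge^{\mathbf{A}}C \in X$, so the idempotent $\bigwedge^{\mathbf{A}}C$ already saturates that cut, a contradiction. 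Your closing hedge about transfinite iteration is unnecessary: as you yourself argue, every cut of $\overline{\mathbf{A}}$ restricts to a cut of $\mathbf{A}$ and is separated either by an old idempotent or by the newly inserted one, so a single step suffices (this is how \cite{CT05} proceeds).

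One aside in your write-up is factually backwards, though nothing downstream depends on it. You claim the unsaturated cuts sit strictly inside a single Wajsberg component, the ordinal-sum boundaries being ``witnessed by the idempotent $1$ of the lower component.'' In an ordinal sum the hoops all share the single top element $1$, so no such witnessing idempotent sits at a boundary; and the cut conditions ($Y$ closed under $\cdot$ and $x \cdot y = x$ across the cut) in fact prevent a cut from splitting a totally ordered Wajsberg hoop nontrivially: in a cancellative hoop $x \cdot y = x$ forces $y = 1$, and in an MV-chain it forces $y = 1$ or $x = 0$. The unsaturated cuts are exactly the component boundaries that lack an adjacent idempotent --- for instance, the cut below a nontrivial cancellative (product-type) component, which has no minimum. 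Since your construction inserts an idempotent at \emph{every} unsaturated cut wherever it lies, and your verifications of associativity, residuation, saturation and item~(2) use only the cut conditions and not the purported location of the cuts, the proof is unaffected; but the remark should be corrected.
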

\begin{proof}
This result is based on Theorem 4 in \cite{CT05}, but it does not prove (2). We will provide the proof for this claim.

Let $\mathbf{A}$ be a BL-chain and $S$ be the set of all non-saturated cuts of $\mathbf{A}$. 
For each $\alpha \in S$, the upper part of $\alpha$ will be denoted $\alpha^+$, and the lower part, by $\alpha^-$. 
On $\overline{A} = A \cup S$ define the binary relation $\leq$ as follows:
\begin{equation*}
x \leq y \iff
\begin{cases}
x,y \in A \text{ and } x \leq^{\mathbf{A}} y,  \\
x,y \in S \text{ and } x^- \subseteq y^-,  \\
x \in A, y\in S \text{ and } x \in y^-,  \\
x \in S, y\in C \text{ and } y \in x^+.
\end{cases}
\end{equation*}
It follows that $\leq$ is a total order in $\overline{A}$.
Operations are extended naturally.
For more details, refer to \cite{CT05}.

Let $C \subseteq A$ such that $\bigwedge^{\mathbf{A}}C$ exists and is idempotent.
In the extension $\overline{\mathbf{A}}$, new elements not present in $\mathbf{A}$ may be inserted between $\bigwedge^{\mathbf{A}} C$ and $C$. 
Let $H$ denote this set of new elements.
Take an element $h$ in $H$.
Since $h$ is a new element, it must be an unsaturated cut of $\mathbf{A}$, meaning the sets $\{ x \in A \mid x \leq h \}$, $\{ x \in A \mid h \leq x\}$ form an unsaturated cut of $\mathbf{A}$.
However, the idempotent element $\bigwedge^{\mathbf{A}}C$ already saturates this cut, leading to a contradiction.
Therefore, $H$ must be empty, implying $\bigwedge^{\mathbf{A}}C= \bigwedge^{\overline{\mathbf{A}}}C$.
\end{proof}

% If $A$ is a poset and $a \in A$, we denote $\uparrow a$ the set $\{x \in A \mid x \geq a\}$.
% If $X \subseteq A$, we denote $\uparrow X$ the set ${{x \in A \mid x \geq y \text{ for some } y \in A}}$. 
% Dually we define $\downarrow a$ and $\downarrow X$.
If $\mathbf{A}=\bigoplus_{i\in I}\mathbf{H}_i$ is a BL-chain represented as an ordinal sum of Wajsberg hoops and $C \subseteq A$ is a subset such that $\bigwedge C$ exists, then we define $C$ to be $\wedge$-\textit{connected} if there exists $i \in I$ and $c\in C$ such that $\bigwedge C \in H_i$ and $c \in H_i$. 
The concept of $\vee$-connected is defined dually.

\begin{lemma} \label{weakly_saturate}
Let $\mathbf{A}=\bigoplus_{i\in I}\mathbf{H}_i$ be a countable BL-chain that is an ordinal sum of totally ordered Wajsberg simple hoops.
Then there is an embedding $f$ from $\mathbf{A}$ into a countable weakly saturated BL-chain $\mathbf{A}^*$. 
Also, if $C$ is a subset of $A$ such that $\bigwedge^{\mathbf{A}} C$ exists and either $C$ is $\wedge$-connected or $\bigwedge^{\mathbf{A}}C$ is idempotent, then $f(\bigwedge^{\mathbf{A}}C )= \bigwedge^{\mathbf{A}^*}f(C)$. The same holds true for the supremum.
\end{lemma}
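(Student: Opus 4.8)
The plan is to build $\mathbf{A}^*$ by refining the ordinal sum decomposition of $\mathbf{A}$, inserting just enough idempotents that every non-idempotent element lands inside a genuine MV- or product-cluster bounded by idempotents, while leaving the interior of each summand untouched. First I would record the structure of the summands: each $\mathbf{H}_i$ is a \emph{simple} totally ordered Wajsberg hoop, so it is either bounded, in which case it is the $0$-free reduct of a simple MV-chain and hence Archimedean and embeddable into the standard \L ukasiewicz algebra, or unbounded, in which case it is a cancellative hoop, again Archimedean by simplicity. Correspondingly the idempotents of $\mathbf{A}$ are exactly $1$ together with the bottom element $0^{\mathbf{H}_i}$ of each bounded summand.

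Next I would define $\mathbf{A}^*$ as an ordinal sum refining $\bigoplus_{i}\mathbf{H}_i$ as follows. Each bounded $\mathbf{H}_i$ is kept as an MV-cluster with bottom the idempotent $0^{\mathbf{H}_i}$; its top boundary is the idempotent of $\mathbf{A}$ immediately above $\mathbf{H}_i\setminus\{1\}$ when the two are adjacent, and otherwise a freshly inserted idempotent placed just above $\mathbf{H}_i\setminus\{1\}$. Each unbounded (cancellative) $\mathbf{H}_i$ is turned into a product algebra by adjoining a new bottom idempotent, i.e.\ replacing it by $\mathbf{2}\oplus\mathbf{H}_i$, its top being handled as above. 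Because all insertions are idempotents placed at block junctions and never inside a summand, the inclusion $f\colon\mathbf{A}\to\mathbf{A}^*$ leaves every operation among old elements unchanged (ordinal sums preserve the operations of their summands, and adjoining a bottom idempotent does not alter the cancellative operations), so $f$ is a BL-embedding. Since $I$, each $\mathbf{H}_i$, and the set of inserted idempotents are countable, $\mathbf{A}^*$ is a countable ordinal sum of totally ordered Wajsberg hoops (MV-chains, cancellative hoops, and two-element Boolean hoops), hence a countable BL-chain.

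I would then verify weak saturation directly. A non-idempotent $c$ lies in some $\mathbf{H}_i$; by construction the interval $[a,b]$ cut out by the idempotents bounding that summand is, under the operations \eqref{eq:defining_algebra_cluster}, exactly the MV-algebra $\mathbf{H}_i$ (bounded case) or the product algebra $\mathbf{2}\oplus\mathbf{H}_i$ (unbounded case), and these are Archimedean precisely because $\mathbf{H}_i$ is simple. For an idempotent $c$, its maximal interval of idempotents is a Gödel algebra, since idempotents of any BL-chain multiply as $\min$, and the required boundary conditions hold because the insertions were made exactly at the junctions where $\mathbf{A}$ lacked an idempotent bound.

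The main obstacle is the ``Moreover'' clause: preservation of connected or idempotent meets and joins. The only way $f(\bigwedge^{\mathbf{A}}C)=\bigwedge^{\mathbf{A}^*}f(C)$ can fail is that some inserted idempotent $e$ slips strictly between $m:=\bigwedge^{\mathbf{A}}C$ and all of $C$. If $C$ is $\wedge$-connected, then $m$ and some $c\in C$ lie in a common summand $\mathbf{H}_i$, so the open interval between them is contained in $\mathbf{H}_i$; since no insertion is ever made in the interior of a summand, no such $e$ exists and $m$ remains the greatest lower bound. If instead $m$ is idempotent, I would argue exactly as in Lemma \ref{oldSaturate}(2): the idempotent $m$ already saturates the cut just below $C$, and every new element of $\mathbf{A}^*$ corresponds to a previously unsaturated cut, so no new element can occupy the gap between $m$ and $C$; indeed at the junctions where we do insert an idempotent, the corresponding infimum simply fails to exist in $\mathbf{A}$, so the hypothesis is not triggered there. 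The supremum statement is dual. This cut-control step—showing that insertions land only at junctions with no pre-existing idempotent bound, and hence never disturb an existing connected or idempotent meet or join—is the crux of the argument, and the hypotheses of $\wedge$-connectedness and idempotency are precisely what keep $m$ away from the inserted junctions.
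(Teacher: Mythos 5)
Your overall strategy---bypassing the Cignoli--Torrens saturation construction and building $\mathbf{A}^*$ directly by inserting idempotents at the junctions of the ordinal sum---is a reasonable alternative, and your treatment of the \emph{top} boundary of each cluster (reuse an adjacent idempotent of $\mathbf{A}$ when one exists, insert a fresh one only otherwise) shows exactly the right kind of care. The gap is that you do not apply the same care to the bottoms: you replace \emph{every} unbounded cancellative summand $\mathbf{H}_k$ by $\mathbf{2}\oplus\mathbf{H}_k$, unconditionally. This breaks the ``Moreover'' clause. Concretely, take $\mathbf{A}=\mathbf{2}\oplus\mathbf{C}$, where $\mathbf{C}$ is the negative cone of $\mathbb{Z}$ (a countable, cancellative, hence Wajsberg, totally ordered hoop, simple because it is Archimedean), and let $D$ be the set of non-unit elements of the cancellative summand. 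Then $\bigwedge^{\mathbf{A}}D$ exists: it is the bottom element $0$ of the $\mathbf{2}$ summand, which is idempotent, so the hypothesis of the preservation clause is triggered. But your construction inserts a fresh idempotent $e$ with $0<e<d$ for all $d\in D$, whence $\bigwedge^{\mathbf{A}^*}f(D)=e\neq f\bigl(\bigwedge^{\mathbf{A}}D\bigr)$. Your escape-hatch claim---that ``at the junctions where we do insert an idempotent, the corresponding infimum simply fails to exist in $\mathbf{A}$''---is precisely what fails in this configuration: the infimum exists and is idempotent. Relatedly, your appeal to the argument of Lemma \ref{oldSaturate}(2) is not available to you: that argument works because in the cut construction new elements fill only \emph{unsaturated} cuts, whereas your inserted bottom $e$ fills a cut that is already saturated by the idempotent $0$. (This is exactly where your construction diverges from the one in \cite{CT05} that the paper relies on.)

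The repair is local: adjoin a new bottom to a cancellative summand only when no idempotent of $\mathbf{A}$ lies immediately below it, and otherwise reuse that adjacent idempotent as the bottom of the product cluster (such an adjacent idempotent is necessarily the bottom of a two-element summand, since a bounded simple summand with more than two elements has non-idempotent elements accumulating at the junction; in the remaining cases any infimum landing at the junction is a non-idempotent, non-connected one, so the hypothesis is never triggered there and the insertion is harmless). With that correction your direct construction appears workable, and it is genuinely different from the paper's proof, which instead embeds $\mathbf{A}$ into the saturated chain $\overline{\mathbf{A}}$ of Lemma \ref{oldSaturate}, quotes Lemma 3 of \cite{CT05} for weak saturation, and then passes to the countable subalgebra obtained by adding to $A$ the countably many cluster-bounding idempotents $a_c,b_c$; the paper then gets the preservation clause almost for free from Lemma \ref{oldSaturate}(2) and the fact that inside a summand the only possible idempotent is its first element. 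As submitted, however, your proof of the preservation clause---which you yourself identify as the crux---does not go through.
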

\begin{proof}
Let $\mathbf{A}= \bigoplus_{i\in I}\mathbf{H}_i$ be a countable BL-chain represented as an ordinal sum of totally ordered Wajsberg simple hoops.
By Lemma \ref{oldSaturate} we can embed $A$ in a saturated BL-chain $\overline{A}$.
From Lemma 3 in \cite{CT05}, we know that $\overline{\mathbf{A}}$ is weakly saturated. 
Then for each $c \in \overline{A }- \text{idp}(\overline{A})$, there exist elements $a_c,b_c \in \text{idp}(\overline{A})$ such that $c \in [a_c,b_c]$ and $\overline{\mathbf{A}}_{[a_c,b_c]}$ is either a MV-algebra or a product algebra, and for each $c \in \idp(\overline{\mathbf{A}})$, we have elements $a_c,b_c \in \idp(\overline{A})$ such that $c \in [a_c,b_c]$ and $\overline{\mathbf{A}}_{[a_c,b_c]}$ is a maximal Gödel algebra.

Let $S=\{ a_c,b_c \mid c\in A \}$, a countable set since $A$ is countable. 
All elements of $S$ are idempotents, making $\mathbf{S \cup A}$ a subalgebra of $\overline{\mathbf{A}}$.
It is also clear that $\mathbf{S \cup A}$ is weakly saturated.

Define the set $S=\{ a_c,b_c \mid c\in A \}$. 
Since $A$ is countable, $S$ is countable. 
All elements of $S$ are idempotents, then $\mathbf{S \cup A}$ is a subalgebra of $\overline{\mathbf{A}}$.
It is easy to see that the algebra $\mathbf{S \cup A}$ is weakly saturated. 

Consider now a subset $C \subseteq A$ such that $\bigwedge^{\mathbf{A}}C$ exists.
First, suppose $C$ is $\wedge$-connected. 
Then, there exists $i\in I$ and $c\in C$ such that $\bigwedge^{\mathbf{A}}C \in H_i$ and $c \in H_i$.
If $c$ is not idempotent, it can be readily observed that the set $\{ x \in S \mid \bigwedge^{\mathbf{A}}C < x < c\}$ is empty, given the fact that the only possible idempotent element in $H_i$ is its first element.
This implies that $\bigwedge^{\mathbf{A}}C = \bigwedge^{\mathbf{A}^*}C$.
If $c$ is idempotent, it must be the first element of $H_i$, leading to $\bigwedge^{\mathbf{A}}C = \bigwedge^{\mathbf{A}^*}C = c$.
Now, suppose $\bigwedge^{\mathbf{A}}C$ is idempotent. 
It is a consequence of Lemma \ref{oldSaturate} that $\bigwedge^{\mathbf{A}}C = \bigwedge^{\mathbf{A}^*}C$. 
The proof of the supremum case can be demonstrated using a similar approach.

\end{proof}

\begin{lemma} \label{inf_RC_idempotente}
Let $\mathbf{A}$ be a BL-chain and $\{a_x\}_{x\in X}$ a sequence in $A$.
\begin{itemize}
\item If $(\inf_x a_x)^2 = \inf_x a_x^2$, then $\{a_x\}$ is $\wedge$-connected or $\inf_x a_x$ is idempotent. 
\item If the supremum of $\{a_x\}$ exists, then $\{a_x\}$ is $\vee$-connected or $\sup_x a_x$ is idempotent. 
\end{itemize}
\end{lemma}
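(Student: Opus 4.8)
The plan is to invoke Lemma~\ref{decomposition} to fix a decomposition $\mathbf{A} = \bigoplus_{i \in I} \mathbf{H}_i$ as an ordinal sum of totally ordered Wajsberg hoops, and then to argue contrapositively in each item, reading $\wedge$-connectedness (resp.\ $\vee$-connectedness) directly off the position of the infimum (resp.\ supremum) inside this ordinal sum. Throughout I will use the two structural facts about the order that for $i < j$ every non-unit element of $H_i$ lies strictly below every non-unit element of $H_j$, and that the product of two elements of the same $H_j$ stays in $H_j$ and is $\leq$ each factor. The relevant idempotents that appear at the end are the shared top $1$ and the bottom element of each bounded hoop.

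For the first item, write $m = \inf_x a_x$ and suppose $\{a_x\}$ is not $\wedge$-connected. After disposing of the trivial case $m = 1$ (which forces all $a_x = 1$), we have $m \in H_i \setminus \{1\}$ for a unique $i$, and ``not connected'' says no $a_x$ lies in $H_i$. Since each $a_x \geq m$ and the non-unit elements of lower hoops sit below $m$, every $a_x$ must in fact lie in a hoop $H_{j(x)}$ with $j(x) > i$ and be a non-unit there. The key observation is then that $a_x^2$ remains a non-unit element of the same higher hoop $H_{j(x)}$, whence $a_x^2 > m$ for every $x$; consequently $m$ is a lower bound of $\{a_x^2\}$ and $\inf_x a_x^2 \geq m$. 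Now the hypothesis $m^2 = (\inf_x a_x)^2 = \inf_x a_x^2$ gives $m^2 \geq m$, and since $m^2 \leq m$ holds in any hoop we conclude $m^2 = m$, i.e.\ $m$ is idempotent. This use of the hypothesis is essential: without it, a sequence descending through a higher cancellative hoop onto a non-idempotent coatom of $H_i$ is not $\wedge$-connected and has non-idempotent infimum, and indeed such a sequence violates $(\inf_x a_x)^2 = \inf_x a_x^2$.

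For the second item I would argue dually, now without needing any extra hypothesis. Put $M = \sup_x a_x$ and assume $\{a_x\}$ is not $\vee$-connected; again dispatch $M = 1$ directly and otherwise take $M \in H_i \setminus \{1\}$ with no $a_x$ in $H_i$. Since $a_x \leq M$ while non-unit elements of higher hoops exceed $M$, every $a_x$ lies in a strictly lower hoop, so every non-unit element of $H_i$ is an upper bound of $\{a_x\}$. As $M$ is the least upper bound and is itself a non-unit element of $H_i$, it must be the least non-unit element of $H_i$, that is, the bottom element of $H_i$; being the bottom of a hoop, $M$ is idempotent. The main thing to be careful about in both items is the bookkeeping of the ordinal-sum order together with the shared top $1$ (so that ``$a_x \in H_i$'' is read correctly for the edge value $1$), and, in the first item, making the inequality $a_x^2 > m$ and hence $\inf_x a_x^2 \geq m$ precise before feeding it to the hypothesis.
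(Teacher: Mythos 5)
Your proof is correct and takes essentially the same route as the paper's: after fixing the ordinal-sum decomposition from Lemma~\ref{decomposition}, you show for the first item that non-$\wedge$-connectedness forces each $a_x^2$ to stay strictly above $\inf_x a_x$, giving $\inf_x a_x \leq \inf_x a_x^2$, which combined with the hypothesis yields idempotence, and for the second item that $\sup_x a_x$ must be the minimum element of its hoop and hence idempotent. Your extra bookkeeping around the shared unit $1$ and the counterexample showing the hypothesis in the first item is genuinely needed are sound refinements of the paper's terser presentation, not a different argument.
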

\begin{proof}
Let $\mathbf{A}=\bigoplus_{i\in I}\mathbf{H}_i$ be a BL-chain represented as an ordinal sum of totally ordered Wajsberg hoops.

1) Let $y\in X$. 
Then $a_y \in H_i$ and $\inf_x a_x \in H_j$ for some $i,j \in I$, and consequently $a_y^2 \in H_i$.
If the sequence $\{ a_x \}$ is not $\wedge$-connected, then $j < i$, implying $\inf_x a_x < a_y^2$.
Since $y$ was chosen arbitrarily, we have $\inf_x a_x \leq \inf_x a_x^2$.
The other inequality is trivial.
Thus, $\inf_x a_x = \inf_x a_x^2 = (\inf_x a_x)^2$.

2) We have that $\sup_x a_x \in H_i$ for some $i$. 
Let $b \in H_i$.
If the sequence $\{ a_x \}$ is not $\vee$-connected, then $b > a_x$ for all $x\in X$, implying $b \geq \sup_x a_x$.
Therefore, $\sup_x a_x$ is the minimum element of $H_i$, and thus, an idempotent element.
\end{proof} 

Let $A $ and $B$ be posets. Recall that a function $f:A\to B$ is a \textit{complete embedding} if and only if $\inf_{s\in S} f(s) = f(\inf_{s\in S} s)$ and $\sup_{t\in T} f(t) = f(\sup_{t\in T} t)$ whenever $\bigwedge S $ exists in $A$ and $\bigvee T $ exists in $A$. 

The following result comes from \cite{HajekMontagna08}, mostly from Lemma 3.6 and Lemma 3.7.
\begin{lemma} \label{weakly_en_tnorma}
Every countable and weakly saturated BL-chain $\mathbf{A}$ embeds into a continuous t-norm by a complete embedding.
\end{lemma}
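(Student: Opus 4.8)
The plan is to realize $\mathbf{A}$ inside a continuous t-norm by exploiting the classical Mostert--Shields description of continuous t-norms as ordinal sums of copies of the three basic t-norms (\L ukasiewicz, product and Gödel), and matching this description block by block against the cluster decomposition provided by weak saturation. Concretely, the weak saturation hypothesis partitions $A$ into maximal intervals: each non-idempotent element lies in a maximal interval $[a,b]$ on which $\mathbf{A}_{[a,b]}$ is an Archimedean MV-algebra or product algebra, while each maximal block of idempotents is a Gödel algebra. Writing $\mathbf{A}=\bigoplus_{i\in I}\mathbf{K}_i$ as the ordinal sum of these clusters, the index chain $I$ is countable because $\mathbf{A}$ is.

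First I would embed each cluster into the appropriate standard component. If $\mathbf{K}_i$ is an Archimedean MV-chain it embeds, by Hölder's theorem, into the standard MV-algebra on $[0,1]$; if it is an Archimedean product chain it embeds into the standard product algebra; these are precisely the embeddings isolated in Lemma 3.6 of \cite{HajekMontagna08}. Archimedeanity is what makes these embeddings \emph{complete} on the cluster: the image of an Archimedean chain in $[0,1]$ is either discrete or order-dense, and in both cases every infimum and supremum that exists inside the cluster is reflected in $[0,1]$. A Gödel cluster is just a countable chain of idempotents, which embeds order-densely into a chain of idempotents.

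Next I would assemble the target. Using the countability of $I$, I would choose pairwise disjoint open subintervals $(p_i,q_i)\subseteq[0,1]$, one for each MV- or product-cluster, arranged order-isomorphically with the corresponding positions in $I$, with the idempotent/Gödel skeleton mapped into the complementary closed set. Declaring the operation to be \L ukasiewicz on each interval assigned to an MV-cluster, product on each interval assigned to a product-cluster, and the minimum elsewhere yields, by Mostert--Shields, a genuine continuous t-norm $\mathbf{T}$; transporting the cluster embeddings into their intervals (rescaled so that $p_i$ and $q_i$ receive the bottom and top idempotents of $\mathbf{K}_i$) produces a map $f\colon\mathbf{A}\to\mathbf{T}$. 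That $f$ is an order-preserving homomorphism and an injection follows because the ordinal-sum operations on both sides agree block by block.

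The main work --- and the expected obstacle --- is verifying that $f$ is a \emph{complete} embedding. Given $S\subseteq A$ with $m=\inf^{\mathbf{A}}S$ existing, the ordinal-sum structure (as in the proof of Lemma \ref{inf_RC_idempotente}) forces a dichotomy: either $S$ meets the cluster containing $m$, the $\wedge$-connected case, or $m$ is an idempotent approached only from clusters strictly above it. In the first case preservation reduces to completeness of the single cluster embedding, already secured above. In the second case the density clauses of weak saturation --- that every element just below an idempotent block is a limit of non-idempotents, and dually above --- are exactly what force the point of $[0,1]$ assigned to $m$ to be the infimum of $f(S)$, so that the interval placement introduces no spurious gap. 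The supremum case is dual. Thus the delicate point is the coordinated arrangement of the intervals $(p_i,q_i)$ together with the cross-cluster limits, and it is precisely here that weak saturation (its Archimedean clusters and its density conditions) together with the countability of $\mathbf{A}$ are indispensable; this is the part of the argument that goes beyond merely quoting Lemmas 3.6 and 3.7 of \cite{HajekMontagna08}.
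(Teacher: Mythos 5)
The paper does not actually prove this lemma: it is imported wholesale from H\'ajek and Montagna, ``mostly from Lemma 3.6 and Lemma 3.7'' of \cite{HajekMontagna08}, and your reconstruction --- decomposing the weakly saturated chain into Archimedean MV/product clusters and G\"odel blocks of idempotents, embedding each cluster into its standard counterpart, and assembling a Mostert--Shields ordinal sum on $[0,1]$ whose interval placement respects the cross-cluster limits guaranteed by the density clauses of weak saturation --- is essentially the argument of that cited source. So your route coincides with the paper's (i.e.\ with the proof it delegates); the one point to keep in mind is that the ``coordinated arrangement'' you correctly identify as the delicate step is precisely the technical content of those two lemmas of \cite{HajekMontagna08}, so a self-contained write-up would still have to carry out that placement argument explicitly rather than assert that the density conditions force it.
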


\begin{theorem}[Completeness]
Let $\Gamma$ be a theory and $\phi$ a sentence. Then
\[
\Gamma \vdash \phi \iff \Gamma \vDash \phi.
\] 
\end{theorem}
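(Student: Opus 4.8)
The plan is to prove the two directions separately. The direction $\Gamma \vdash \phi \Rightarrow \Gamma \vDash \phi$ is soundness, already established in the corollary to Lemma~\ref{infinitaryRuleSoundness}, so I would concentrate on completeness in contrapositive form: assuming $\Gamma \nvdash \phi$, I would build a continuous t-norm $\mathbf{B}$ and a $\mathbf{B}$-structure $\mathbf{M}$ modelling $\Gamma$ with $\|\phi\|^{\mathbf{M}} \neq 1$. First I would invoke Lemma~\ref{HenkinConstruction} to pass to a countable language $\mathcal{L}'$ and a theory $\Gamma^* \supseteq \Gamma$ that is prelinear and Henkin and still satisfies $\Gamma^* \nvdash \phi$. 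I then form the Lindenbaum algebra $\mathbf{L} = \mathbf{L}_{\Gamma^*}$; by Lemma~\ref{Henkin}(2) prelinearity makes $\mathbf{L}$ a countable BL-chain, and by Lemma~\ref{Henkin}(1) the Henkin property gives $[\forall x \psi] = \inf_c [\psi\s{x}{c}]$ and $[\exists x \psi] = \sup_c [\psi\s{x}{c}]$, with $c$ ranging over the constants of $\mathcal{L}'$.

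Next I would turn $\mathbf{L}$ into an ordinal sum of simple hoops. Using $\phi := 0$ in the Infinitary Rule together with the provable equivalence $0 \vee \theta \leftrightarrow \theta$, one checks that whenever $[\alpha] \to [\beta]^n = 1$ for all $n$ (equivalently $\Gamma^* \vdash \alpha \to \beta^n$ for all $n$) one has $[\alpha] \to [\alpha]\cdot[\beta] = 1$; since every element of $\mathbf{L}$ is of the form $[\alpha]$ and in a chain an infinite $\&$ equals $1$ only if every factor does, this says exactly that $\mathbf{L}$ satisfies the quasi-identity (K). Lemma~\ref{infi_implies_simple}, combined with Lemma~\ref{decomposition}, then expresses $\mathbf{L}$ as an ordinal sum of totally ordered Wajsberg simple hoops. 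Now Lemma~\ref{weakly_saturate} embeds $\mathbf{L}$ into a countable weakly saturated BL-chain $\mathbf{A}^*$ via $f$, and Lemma~\ref{weakly_en_tnorma} gives a complete embedding $g$ of $\mathbf{A}^*$ into a continuous t-norm $\mathbf{B}$. I would set $e = g \circ f : \mathbf{L} \to \mathbf{B}$.

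The central construction is the canonical model. I would take $M$ to be the set of constants of $\mathcal{L}'$ and define $P^{\mathbf{M}}(c_1,\dots,c_n) = e([P(c_1,\dots,c_n)])$, so that a straightforward induction on formulas proves $\|\psi\|^{\mathbf{M},v} = e([\psi[v]])$, where $\psi[v]$ is the sentence obtained by replacing each free variable by its $v$-value. The only nontrivial inductive steps are the quantifiers, and this is the step I expect to be the main obstacle: I must show $e$ preserves the infimum $[\forall x\psi] = \inf_c[\psi\s{x}{c}]$ and the supremum $[\exists x\psi] = \sup_c[\psi\s{x}{c}]$. Since $g$ is a complete embedding it preserves every existing infimum and supremum, so the work is entirely about $f$, which by Lemma~\ref{weakly_saturate} preserves only $\wedge$-connected (resp. $\vee$-connected) or idempotent infima (resp. suprema). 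For the supremum, Lemma~\ref{inf_RC_idempotente}(2) applies directly because $\sup_c[\psi\s{x}{c}]$ exists in $\mathbf{L}$. For the infimum I would use the axiom (RC): from the provable equivalence $\forall x(\chi\&\chi) \leftrightarrow (\forall x \chi)\&(\forall x\chi)$ (the nontrivial direction being exactly (RC), the converse following from $\forall x\chi \to \chi$ and Generalization) together with the Henkin equalities I obtain $(\inf_c a_c)^2 = \inf_c a_c^2$ with $a_c = [\chi\s{x}{c}]$, whence Lemma~\ref{inf_RC_idempotente}(1) forces $\{a_c\}$ to be $\wedge$-connected or its infimum idempotent. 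In both cases $f$, hence $e$, preserves the relevant bound, closing the induction and showing that $\mathbf{M}$ is safe.

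Finally I would verify the two model claims. For $\gamma \in \Gamma^*$ we have $\Gamma^* \vdash \gamma$, so every instance $\gamma[v]$ is provable and $[\gamma[v]] = 1$, giving $\|\gamma\|^{\mathbf{M},v} = e(1) = 1$; thus $\mathbf{M}$ is a model of $\Gamma^* \supseteq \Gamma$. On the other hand $\Gamma^* \nvdash \phi$ gives $[\phi] \neq 1$ in $\mathbf{L}$, and since $e$ is an injective homomorphism sending $1$ to $1$, we get $\|\phi\|^{\mathbf{M}} = e([\phi]) \neq 1$, so $\mathbf{M}$ is not a model of $\phi$. Reducing $\mathbf{M}$ to the original language $\mathcal{L}$ (the Remark guarantees that the added constants affect neither provability nor the present hypothesis) yields a continuous t-norm counter-model to $\Gamma \vDash \phi$, completing the proof.
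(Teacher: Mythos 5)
Your proposal is correct and takes essentially the same route as the paper's own proof: contrapositive via Lemma~\ref{HenkinConstruction}, the Lindenbaum chain satisfying (K) and decomposing into simple Wajsberg hoops by Lemma~\ref{infi_implies_simple}, the two-stage embedding through Lemmas~\ref{weakly_saturate} and~\ref{weakly_en_tnorma}, and the quantifier steps of the truth-value induction secured by (RC) together with Lemma~\ref{inf_RC_idempotente}. Your only departures are presentational refinements the paper leaves implicit (deriving (K) by taking $\phi := 0$ in the Infinitary Rule, treating free variables via $\psi[v]$, and the final reduct to $\mathcal{L}$), all of which are sound.
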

\begin{proof}
We only have to prove $\Gamma \vDash \phi \implies \Gamma \vdash \phi$.
Let us assume we have a theory $\Gamma$ and a sentence $\phi$ such that $\Gamma \nvdash \phi$.
By Lemma \ref{HenkinConstruction}, we can establish the existence of a theory $\Gamma ^*$ that contains $\Gamma$, satisfies $\Gamma^* \nvdash \phi$, and is both prelinear and Henkin. 

Next, let $\mathbf{L}$ be the Lindenbaum algebra  asociated with $\Gamma ^ *$.
By further applying Lemma \ref{Henkin}, we deduce that $\mathbf{L}$ is totally ordered. 
Since the logic $\vdash$ has the infinitary rule (\ref{Inf}), it is clear that $\mathbf{L}$ satisfies the generalized quasi-identity $(\&_{n\in \mathbb{N}} (\alpha \to \beta^n) = 1) \Rightarrow (\alpha \to \alpha \& \beta = 1)$. 
Then by Lemma \ref{infi_implies_simple}, we have that $\mathbf{L} = \bigoplus_{i\in I}\mathbf{H}_i $, where $I$ is a totally ordered poset with a minimum and each $\mathbf{H}_i$ is a simple hoop. 

Given that the language is countable, $L$ is also countable. This allow us to apply Lemma \ref{weakly_saturate} to obtain an embedding $f:\mathbf{L}\to \mathbf{L^*}$, where $\mathbf{L^*}$ is a countable weakly saturated BL-chain and if $C \subseteq A$ is $\wedge$-connected or if $\bigwedge^{\mathbf{A}}C$ is idempotent, then $\bigwedge^{\mathbf{A}}C = \bigwedge^{\mathbf{A}^*}C$ (the same holds for the supremum).
Subsequently, according to Lemma \ref{weakly_en_tnorma}, there exists a complete embedding $g$ from $\mathbf{L^*}$ into a continuous t-norm $([0,1], \cdot)$.

We now proceed to construct a model based on $([0,1], \cdot)$.
Let $M$ to be the set of all constants of the language. 
For each constant $c$ we define $c^\mathbf{M} = c$.
For each predicate symbol $R$ of arity $n$ we define $R^{\mathbf{M}}(c_1, \ldots, c_n) = g(f([R(c_1, \ldots, c_n)]))$.
This completes the definition of $\mathbf{M} = (M,.^\mathbf{M})$.

Our goal is to prove that the value $g(f([\psi]))$ of any sentence $\psi$ is equal to its interpretation in the structure $\mathbf{M}$, that is $g(f([\psi])) = || \psi ||^{\mathbf{M}}$.
We proceed by induction on the structure of the sentence $\psi$.
For any atomic formula the claim follows by definition, and the induction step for connectives is straightforward.
Consider a universally quantified sentence $\psi = \forall x \chi$. By Lemma \ref{Henkin} we have
\[
g(f([\forall x \chi])) = g(f( \inf_c [\chi\s{x}{c}] )).
\]
Given that (\ref{RC}) is an axiom and Lemma \ref{Henkin}, it follows that $(\inf_c [\chi\s{x}{c}])^2 = \inf_c [\chi\s{x}{c}]^2$. 
Knowing this, Lemma \ref{inf_RC_idempotente} implies that the set $\{\chi\s{x}{c}\}_c$ is either $\wedge$-connected or $\inf_c [\chi\s{x}{c}]$ is idempotent. Therefore, as $f$ preserves infimum on sets that are $\wedge$-connected or yield idempotent elements, 
\[
g(f( \inf_c [\chi\s{x}{c}] )) =g( \inf_c f([\chi\s{x}{c}] )),
\]
and since $g$ is complete,
\[
g( \inf_c f([\chi\s{x}{c}] )) = \inf_c g(f([\chi\s{x}{c}] )).
\]
Applying the inductive hypothesis and the definition of the interpretation of the infimum completes the proof for the universally quantified sentences. 
The case where $\psi$ is an existentially quantified sentence follows in the same way, except that the axiom (\ref{RC}) is not required.

We have $[\psi]=1$ for all $\psi \in \Gamma$ and, since $\Gamma^* \nvdash \phi$, we also have $[\phi] < 1$. 
Then $||\psi||^{\mathbf{M}} = g(f([\psi]))= 1$ for all $\psi \in \Gamma$ and $||\phi||^{\mathbf{M}} = g(f([\phi])) < 1$. 
Thus $\Gamma \nvDash \phi$.
\end{proof}  

As an easy corollary of the completeness result and Lemma \ref{infinitaryRuleSoundness} we have that the infinitary rule, which require the formulas involved to be sentences, can now be modified to allow formulas that are not necessarily sentences.

\bibliographystyle{plain}
\bibliography{version_saturadas}

\begin{thebibliography}{10}

\bibitem{BF00}
W.~J. Blok and I.~M.~A. Ferreirim.
\newblock On the structure of hoops.
\newblock {\em Algebra Universalis}, 43(2-3):233--257, 2000.

\bibitem{Busaniche2005}
Manuela Busaniche.
\newblock Decomposition of bl-chains.
\newblock {\em algebra universalis}, 52(4):519--525, February 2005.

\bibitem{CEGT00}
R.~Cignoli, F.~Esteva, L.~Godo, and A.~Torrens.
\newblock Basic fuzzy logic is the logic of continuous t-norms and their
  residua.
\newblock {\em Soft Computing}, 4:106--112, 2000.

\bibitem{CT05}
R.~Cignoli and A.~Torrens.
\newblock Standard completeness of hajek basic logic and decompositions of
  bl-chains.
\newblock {\em Soft Computing}, 9:862--868, 2005.

\bibitem{HajekMontagna08}
P.~Hajek and F.~Montagna.
\newblock A note on the first-order logic of complete bl-chains.
\newblock {\em Mathematical Logic Quarterly}, 54:435--446, 2008.

\bibitem{Hajek98}
Petr H\'{a}jek.
\newblock {\em Metamathematics of fuzzy logic}, volume~4 of {\em Trends in
  Logic---Studia Logica Library}.
\newblock Kluwer Academic Publishers, Dordrecht, 1998.

\bibitem{Hay59}
Louise~S. Hay.
\newblock An axiomatization of the infinitely many-valued predicate calculus.
\newblock Master's thesis, Cornell University, 1963.

\bibitem{Horn69}
A.~Horn.
\newblock Logic with truth values in a linearly ordered heyting algebra.
\newblock {\em Journal of Symbolic Logic}, 34:395--409, 1969.

\bibitem{Kulacka18}
Agnieszka Ku{\l}acka.
\newblock Strong standard completeness for continuous t-norms.
\newblock {\em Fuzzy Sets and Systems}, 345:139--150, 2018.

\bibitem{Mosto61}
A.~Mostowski.
\newblock Axiomatizability of some many valued predicate calculi.
\newblock {\em Fundamenta mathematicae}, 50:165--190, 1961.

\end{thebibliography}

\end{document}